\documentclass[12pt]{amsart}
\usepackage{amsthm,amsmath,amssymb,amscd,graphics,enumerate, stmaryrd,xspace,verbatim, epic, eepic,url,fullpage}

\usepackage[usenames,dvipsnames]{color}

\usepackage[colorlinks=true]{hyperref}

\usepackage[active]{srcltx}
\usepackage[all]{xypic}
\SelectTips{cm}{}

{
   \newtheorem{theorem}[subsubsection]{Theorem}
      \newtheorem*{theorem*}{Theorem}
   \newtheorem{proposition}[subsubsection]{Proposition}

   \newtheorem{lemma}[subsubsection]{Lemma}
   \newtheorem*{claim}{Claim}

   \newtheorem{corollary}[subsubsection]{Corollary}
   
   \newtheorem*{conjecture*}{Conjecture}

}
{\theoremstyle{definition}
          \newtheorem*{exercise*}{Exercise}

   \newtheorem*{example*}{Example}
   \newtheorem{definition}[subsubsection]{Definition}
   
   \newtheorem{problem}[subsubsection]{Problem}
   \newtheorem*{definition*}{Definition}
   \newtheorem{rem}[subsubsection]{Remark}

}
\newcommand{\PP}{{\mathbb{P}}}
\newcommand{\ZZ}{{\mathbb{Z}}}

\newcommand{\GG}{{\mathbb{G}}}

\newcommand{\bbA}{{\mathbb{A}}}
\renewcommand{\AA}{{\mathbb{A}}}

\def\uy{{\underline{y}}}
\def\ut{{\underline{t}}}

\def\Bl{{\rm Bl}}
\def\rmlog{{\rm log}}

\def\hatcO{{\widehat\cO}}

\newcommand{\bmu}{{\boldsymbol{\mu}}}

\newcommand{\tC}{{\widetilde C}}
\newcommand{\tX}{{\widetilde X}}
\newcommand{\tY}{{\widetilde Y}}

\newcommand{\cC}{{\mathcal C}}
\renewcommand{\cD}{{\mathcal D}}

\newcommand{\cG}{{\mathcal G}}

\newcommand{\cI}{{\mathcal I}}
\newcommand{\cJ}{{\mathcal J}}

\newcommand{\cM}{{\mathcal M}}

\newcommand{\cO}{{\mathcal O}}

\newcommand{\cX}{{\mathcal X}}
\newcommand{\cY}{{\mathcal Y}}

\newcommand{\nc}{{\operatorname{nc}}}

\def\<{\langle}
\def\>{\rangle}

\newcommand{\Spec}{\operatorname{Spec}}

\newcommand{\Hom}{{\operatorname{Hom}}}

\newcommand{\Aut}{{\operatorname{Aut}}}

\newcommand{\chara}{\operatorname{char}}

\newcommand{\ocM}{\overline{{\mathcal M}}}

\newcommand{\oX}{{\overline{X}}}

\def\:{{\colon}}
\def\.{{,\dots,}}
\def\dim{{\rm dim}}

\def\inv{{\rm inv}}
\def\loginv{{\rm loginv}}

\newcommand{\double}{\genfrac..{0pt}1
{\raise -1pt\hbox{$\scriptstyle\longrightarrow$}}{\raise 3pt\hbox
{$\scriptstyle\longrightarrow$}}}

\renewcommand{\setminus}{\smallsetminus}

\def\nor{{\rm nor}}
\def\int{{\rm int}}

\def\tototi{\mathbin{\mathop{\otimes}\limits^{\raise-1pt\hbox
{$\scriptscriptstyle {\rm L}$}}}}

\def\indlim{\mathop{\vrule width0pt height7pt depth
4pt\smash{\lim\limits_{\raise 1pt\hbox to 14.5pt
{\rightarrowfill}}}}}
\def\projlim{\mathop{\vrule width0pt height7pt depth
4pt\smash{\lim\limits_{\raise 1pt\hbox to 14.5pt
{\leftarrowfill}}}}}

\newcommand\displaceamount{3pt}

\newcommand{\doubledown}{\ar@<\displaceamount>[d]\ar@<-\displaceamount>[d]}

\newcommand{\doubleup}{\ar@<\displaceamount>[u]\ar@<-\displaceamount>[u]}

\newcommand{\doubleright}{\ar@<\displaceamount>[r]\ar@<-\displaceamount>[r]}

\newcommand{\thickslash}{\mathbin{\!\!\pmb{\fatslash}}}

\def\into{\hookrightarrow}

\newcommand{\ord}{{\operatorname{ord}}}

\def\tilp{{\widetilde p}}

\def\tilC{{\widetilde C}}

\def\tilX{{\widetilde X}}

\def\toisom{\xrightarrow{{_\sim}}}

\def\hatK{{\widehat K}}

\begin{document}
\title{Partial desingularization up to  normal-crossings\\ in characteristic 0 and 2}

\author[D. Abramovich]{Dan Abramovich}
\address{Department of Mathematics, Box 1917, Brown University,
Providence, RI, 02912, U.S.A}
\email{dan\_abramovich@brown.edu}

\author[M. Temkin]{Michael Temkin}
\address{Einstein Institute of Mathematics\\
               The Hebrew University of Jerusalem\\
                Edmond J. Safra Campus, Giv'at Ram, Jerusalem, 91904, Israel}
\email{temkin@math.huji.ac.il}

%

\thanks{This research is supported by BSF grant and 2022230 and NSF grant DMS-2401358, M.T. was visiting IAS, IHES and MPIM when this work was done and he is grateful to these institutes for the hospitality}

\date{\today}

\maketitle
\setcounter{tocdepth}{1}
\tableofcontents

\section{Introduction}

\subsection{Normal-crossing preserving resolutions}

The resolution of algebraic \emph{surface} singularities, especially in characteristic 0, is well understood, and yet it continues to have lessons to teach us.

Let $X$ be an algebraic variety.
We recall that a point $x \in X$ is a \emph{normal crossings point} if, after passing to an \'etale (or analytic, or formal) neighborhood $x \in U$, there is an embedding $U \subset Y$ and a system of regular parameters $y_1,\ldots,y_n$ at $x\in Y$ so that $U$ is defined by the equation
\begin{equation} \label{Eq:nc} y_1 \cdots y_k = 0\end{equation}
 for some $k\leq n$. To emphasize $k$ one says that equation  \eqref{Eq:nc} describes an $\nc(k)$ point \cite{BDS-B-RL}. To avoid spelling out all the necessary choices, we  say that $X$ has \emph{local equation} \eqref{Eq:nc}.

Define a \emph{normal-crossing-preserving resolution} 
$X'\to X$ to be a proper birational morphism, which is an isomorphism on the locus of normal crossings points of $X$, and where $X'$ has only normal crossings singularities.  As a shorthand we use the term \emph{NC-preserving resolution}.

The main thrust of this work is to show that while things work well in characteristic 0, the situation in characteristic 2 is very different and poses new challenges.

\subsection{The trouble of pinch points}
Already in dimension 2 we have:

\begin{lemma}[Koll\'ar {\cite[\S 8]{Kollar-semilog}}]\label{Lem:unresolve-intro}
Suppose $X$ is a surface and $p\in X$ a pinch point. Then there exists no NC-preserving resolution  $X'\to X$ by a variety $X'$.
\end{lemma}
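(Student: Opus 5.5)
We argue by contradiction. Suppose $f\colon X'\to X$ is an NC-preserving resolution with $X'$ a variety (in particular $X'$ is irreducible near the fiber over $p$, which is exactly what rules out simply normalizing). Work étale-locally at $p$, so that $X$ is given by $x^2=y^2z$ (or $x^2-y^2z=0$) in $\AA^3$. The double curve $D=\{x=y=0\}$ is the $z$-axis, and away from $p$ (i.e.\ for $z\neq 0$) $X$ is $\nc(2)$ along $D$. The key feature is that the two branches of $X$ along $D$ are interchanged by monodromy around $z=0$: they are $x=\pm y\sqrt z$.

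Since $f$ is an isomorphism over $X\setminus\{p\}$, the double locus of $X'$ contains the strict transform $D'$ of $D\setminus\{p\}$. Let $D''$ be the closure in $X'$ of the full double (singular) locus of $X'$ containing $D'$. The plan is to study the normalization $\nu\colon\widetilde{X'}\to X'$ and the induced double cover $\widetilde{D''}=\nu^{-1}(D'')\to D''$. Because $X'$ has only normal crossings singularities, $\widetilde{X'}$ is smooth. At an $\nc(2)$ point the preimage consists of two points, one on each local branch. At $\nc(3)$ points, where three double curves meet, the preimage of the double locus is locally a union of smooth curves on three sheets, and the local branches over each double curve are individually labeled.

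The main step, and the expected obstacle, is to show that along a connected component of the (one-dimensional) double locus of $X'$, the two-sheeted cover given by branches is étale in the appropriate sense. Over the normalization $\widetilde{D''}^{\nor}$ of the curve $D''$, the cover should be a trivial or unramified double cover. The point is that at every point of an nc surface, every double curve through it has two locally distinct branches, and no point of $X'$ can make them coalesce; coalescence is exactly what a pinch point does. Concretely, I would verify this from the local models $y_1y_2=0$ and $y_1y_2y_3=0$. At an $\nc(3)$ point, each of the three double curves $y_i=y_j=0$ is smooth with branches $\{y_i=0\}$ and $\{y_j=0\}$, so after normalizing $D''$ the branch cover is unramified everywhere.

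Now take a small loop $\gamma$ in $D'\cong D\setminus\{p\}$ around $p$ (analytically, or via the étale fundamental group). The branch monodromy along $\gamma$ is nontrivial, as computed above. On the other hand, $\gamma$ bounds in the component of $\widetilde{D''}^{\nor}$ containing the preimage of $D'$ near $f^{-1}(p)$. One has to check that $D'$ near $p$ extends across the exceptional fiber to a smooth curve branch of the normalization, which holds since $f$ is proper, $D'$ closes up, and the normalization of a curve germ is a disc. An unramified double cover over a disc is trivial, so the monodromy around $\gamma$ must be trivial, a contradiction. The delicate part is bookkeeping. Branches of $X'$ along $D''$ must be matched with branches of $X$ along $D$ over $D'$, which holds because $f$ is an isomorphism there. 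We must also make sure the argument does not implicitly require $X'$ to be normal or irreducible beyond what is used; irreducibility is only needed to exclude the trivial "separate the branches" resolution by a non-variety.
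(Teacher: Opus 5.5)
Your argument is correct and is essentially the paper's proof. The closure of the strict transform of the double curve is identified with $C$ near $p$: the paper notes $C'=C$ because $C$ is normal at $p$, which is your ``normalization of a curve germ is a disc.'' The branch cover of a normal-crossings surface along a smooth component of its double locus is \'etale, and this contradicts the fact that $\widetilde C_0\to C_0$ extends over $p$ only as the ramified cover $\widetilde C\to C$.

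One correction to your reading of ``variety.'' Irreducibility plays no role, and the normalization was never a candidate, since it is not an isomorphism over the nc locus. What the hypothesis really excludes is a stack, and it enters exactly where you identify the closure of $D'$ with $D$ near $p$. For a stack that closure could be a root stack at $p$, over which the branch cover is \'etale; this is how the weighted resolutions of Theorem~\ref{Th:nc} escape the argument.
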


We restate this  and recall the argument, see Section \ref{Sec:unresolve}, as its principle reappears below (Theorem \ref{Th:wild-pp}\eqref{It:wild-pp-no-tame}).

For the present discussion, a \emph{pinch point} in characteristic 0 has local equation, in the sense above,
\begin{equation} \label{Eq:pp} x^2 - y^2z=0\end{equation}

The surface \eqref{Eq:pp} is known as \emph{Whitney's umbrella}\footnote{As Eleonore Faber's picture \url{https://www.researchgate.net/figure/The-Whitney-Umbrella_fig6_243073708} suggests, it is an umbrella in distress. A competing view, suitable for couples with a love-hate relationship, is given in \url{https://virtualmathmuseum.org/Surface/whitney_umbrella/whitney_umbrella.html}}. Beyond its inherent beauty, it has a fascinating story to tell, see Section \ref{Sec:umbrellas}.

\subsection{Weighted NC-preserving resolutions exists in characteristic 0} In contrast, if one allows for \emph{stack theoretic weighted blowups}, we have

\begin{theorem}[{Belotto da Silva--Bierstone  \cite{BDS-B-nc}, W{\l}odarczyk  \cite{Wlodarczyk-nc}, see also \cite{AT-partial, BBB}}]\label{Th:nc} Let $X$ be a pure dimensional variety over a field of characteristic 0. There is a functorial stack theoretic NC-preserving resolution  $X'\to X$.
\end{theorem}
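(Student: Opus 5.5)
We plan to prove Theorem~\ref{Th:nc} by adapting the weighted resolution algorithm to a relative, ``log-type'' invariant that measures the distance from normal crossings, rather than from smoothness. Since functoriality for smooth (or \'etale) morphisms is part of the statement, we work \'etale locally, embed $X$ locally into a smooth variety $Y$, and construct a canonical upper-semicontinuous invariant $\inv_X(p)$ taking values in a well-ordered set. This invariant must be independent of the embedding, and it must be minimal exactly at the normal-crossings points. The resolution is then obtained by repeatedly performing the canonical stack-theoretic weighted blowup of the locus of maximal invariant, until the maximum value is attained only at normal-crossings points.

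\emph{Step 1: local invariant.} At a point $p$, choose an embedding $U\subset Y$ of minimal embedding dimension. When $X$ is a hypersurface, we write a local equation $f$ and split off the maximal normal-crossings factor. We look for parameters $y_1,\dots,y_k$ such that $f$ is, up to higher weighted order, $y_1\cdots y_k$ times a remainder. The invariant records $k$ first, and then the weighted-order invariant (the sequence $(a_1,\dots,a_n)$ of the weighted resolution algorithm) of the remainder ideal relative to the divisor $y_1\cdots y_k=0$. This means using the logarithmic version of the invariant, with the $\nc(k)$ hyperplanes playing the role of exceptional divisors. In the general pure-dimensional case, we replace $f$ by the ideal of $X$ and use the same splitting for the hypersurface part of the Hilbert--Samuel stratification.

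\emph{Step 2: centers and independence.} Following the standard approach via maximal contact, we show that the locus of maximal invariant is the vanishing locus of a canonical weighted center $J=(x_1^{1/w_1},\dots,x_m^{1/w_m})$, which depends only on $X$ and is compatible with smooth morphisms. We prove independence of the choices of parameters by showing that the center is characterized intrinsically. In characteristic 0 this follows from the standard coefficient-ideal and derivation arguments: the order ideals are defined using the sheaf of logarithmic differential operators, and these are stable under automorphisms.

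\emph{Step 3: invariant drops.} We perform the stack-theoretic weighted blowup $Y'\to Y$ along $J$ and take the proper transform $X'$. In the weighted charts the weighted-homogeneous leading form $F$ of the equation becomes a polynomial. We then show that at every point of $X'$ over the center, the invariant is strictly smaller than before, where the new $\nc$ components created by the exceptional divisor are counted correctly. Points where $X$ is already normal crossings are never in the center, since there the invariant is minimal; this gives the required isomorphism over the normal-crossings locus. By well-ordering the process terminates. Functoriality follows because each center is canonical.

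The main obstacle is Step~3, specifically the pinch-point phenomenon of Lemma~\ref{Lem:unresolve-intro}. There the leading form $x^2-y^2z$ is not a product of normal-crossings factors, yet it becomes normal crossings only after an orbifold (weighted) chart. We must show that the invariant drops exactly because we allow the $\bmu_w$-stabilizers. Concretely, in the $z$-chart of a weighted blowup of $(x,y,z)$ with weights chosen so that $x^2-y^2z$ is homogeneous, the equation becomes $x^2-y^2$ up to a unit. This factors in characteristic $\neq 2$, since a unit has a square root étale-locally, and the point is then $\nc(2)$.

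The general step requires a ``leading term'' classification: every weighted-homogeneous $F$ with maximal invariant becomes, in some weighted chart, either of smaller invariant or normal crossings. We expect to carry out this classification by first treating the hypersurface case through the factorization of $F$ over the strict henselization, and then reducing the general pure-dimensional case to it.
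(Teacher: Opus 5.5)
There is a genuine gap, and it sits exactly where the paper locates the difficulty (see the remark after Lemma \ref{ncinv}).

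\textbf{Your invariant is not upper semicontinuous.} It records first the number $k$ of normal-crossings factors that split off at $p$, and this number jumps in the wrong direction at pinch points. On the Whitney umbrella $x^2-y^2z$, the generic point $\eta$ of the $z$-axis $C$ has $k=2$ with unit remainder, since $\sqrt z\in\cO^{sh}_\eta$. The pinch point $p$ is a specialization of $\eta$, but it is unibranch, so at most one factor splits off there. If larger $k$ counts as worse, then $\inv(p)<\inv(\eta)$, and the maximal locus is $C\setminus\{p\}$, which is not closed. If larger $k$ counts as better, an $\nc(2)$ point gets a smaller value than the smooth points generizing it. Either way the invariant fails upper semicontinuity, and it is not even constant on the nc locus, let alone minimal there. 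So Step 2 has nothing to start from.

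\textbf{Further gaps.} The splitting is not unique (e.g.\ at the origin of $xyz(x+y)=0$), so independence of your logarithmic center from that choice would itself need proof. Step 3, the ``leading term classification'', is only announced. The pinch-point chart computation you give is a single instance of what is actually the whole content.

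\textbf{The missing idea is that no new invariant is needed.} The paper keeps the wonderful invariant $\inv$ of \cite{ATW-weighted}. It sets $\inv^{\cC}=\min\Gamma$ on the open nc locus and $\inv^{\cC}=\inv$ elsewhere, which is trivially upper semicontinuous. The only issue is whether the maximal locus $W_K\setminus X^{\cC}$ carries the restriction of the ATW center $J_K$. This holds as soon as the nc points of the smooth level set $W_K$ form a closed subset (they are automatically open), so that the non-nc points make up whole components; this is Theorem \ref{Th:principle}. By Lemma \ref{ncinv}, nc points have invariant exactly $(k)_k$, so only the level sets $K=(k)_k$ matter.

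\textbf{The key step is Proposition \ref{Lem:nc(k)}}, which proves this closedness. Suppose $w\in W_K$ specializes an $\nc(k)$ point. Then:
\begin{enumerate}
\item the tangent cone at $w$ is nc, because the factorization of $f_k^0$ descends to a finite extension of $F$;
\item the scheme $H_X$ of hyperplanes in $\PP C$ is finite \'etale of degree $k$ over $W$;
\item the blowup of $W$ is nc and transversal to $E_W$, by induction on $k$;
\item the $k$ disjoint loci $E_W^i$ give $k$ branches at $w$.
\end{enumerate}
This branch-persistence statement, which is precisely what pinch points threaten, is what your proposal lacks. Together with Theorem \ref{Th:principle} and the classical reduction to the hypersurface case, it yields the theorem.
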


In Section \ref{Sec:nc0} we  describe a principle that makes such results inevitable, which we discovered at the same time as \cite{BDS-B-nc}, \cite{Wlodarczyk-nc}, in particular providing a short proof of Theorem    \ref{Th:nc}.

Numerous other results of similar nature are proven in \cite{Szabo,B-M-except-I,B-L-M-minimal2,BDSMV,BDS-B-RL}.

As stated, Theorem \ref{Th:nc}  answers a question by Koll\'ar \cite[Problem 9]{Kollar-semilog} if one allows for resolution by Deligne--Mumford stacks with abelian stabilizers. Passing to coarse moduli spaces directly, this answers the same question with the class including abelian quotients of normal crossings singularities. Applying relative destackification we may narrow the singularities on coarse moduli spaces further.

\begin{definition}\label{Def:poly-circulant}
We say that the coarse moduli  scheme $\overline X$ of $X$ has \emph{higher pinch point singularities} if the stabilizer of a point $p\in X$ with local description  $x_1\cdots x_k = 0$ is abelian and embeds into the permutation group $\Sigma_k$ of the branches.
\end{definition}
Since $k$ is bounded by $\dim(X)+1$ and $\Sigma_k$ has finitely many representations of rank $\dim(X)+1$, there is a finite set of possible singularity types appearing in $\overline X$.
\begin{corollary}\label{Cor:poly-circulant} Let $X$ be a normal-crossings separated Deligne--Mumford stack with abelian inertia in characteristic 0. There is a sequence of blowups and root constructions, which are trivial over the locus where $X$ is representable and normal crossings,
resulting in a modification $X' \to X$ such that $\overline X'$ has higher pinch point  singularities.
\end{corollary}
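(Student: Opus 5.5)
The plan is to reduce to a destackification statement for the stabilizer action on each chart, and then deal with the part of the stabilizer that acts trivially on the branches. Étale locally near a point $p\in X$ with local equation $x_1\cdots x_k=0$, the stack $X$ looks like $[U/G_p]$. Here $U\subset Y$ is the normal-crossings hypersurface in a smooth $Y$ with regular parameters $x_1,\dots,x_n$, and $G_p$ is the abelian stabilizer. In characteristic $0$ the group $G_p$ is linearly reductive, so we may linearize: we choose the parameters to be $G_p$-eigenfunctions, up to permuting the branch coordinates $x_1,\dots,x_k$. This gives a homomorphism $G_p\to\Sigma_k$, with kernel $K_p$ acting diagonally on all coordinates. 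The coarse space has higher pinch point singularities exactly when every $K_p$ is trivial. So the goal is to kill the part of inertia that fixes all branches.

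We would do this in two steps.

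\textbf{Step 1 (reduction to smooth pieces).} Let $X^\nu\to X$ be the normalization, which is smooth, with $X^\nu_{(j)}$ the preimages of the $j$-fold strata. The subgroup $K_p$ is precisely the stabilizer of a point on the smooth stack $X^\nu$ lying over $p$, restricted to how it acts on the branch. Rather than working directly with $X^\nu$, we use the depth stratification $X=X_{\ge1}\supset X_{\ge2}\supset\cdots$. The closed strata $X_{\ge j}$ have normal-crossings structure, and their normalizations $\widetilde X_{\ge j}$ are smooth Deligne--Mumford stacks carrying a natural simple normal crossings boundary coming from the deeper strata.

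\textbf{Step 2 (destackification, functorially, deepest stratum first).} We apply the functorial destackification of Bergh and Bergh--Rydh (relative destackification). This is a sequence of smooth-center blowups and root constructions making the inertia act only through the snc boundary, i.e.\ making the stack a root stack over its coarse space with snc boundary. We apply it to the smooth stacks with boundary $\widetilde X_{\ge j}$, by induction starting from the deepest $j$ where $G$ does not act through the branch permutation.

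The key point is functoriality. It makes the centers on $\widetilde X_{\ge j}$ invariant under the branch-permutation (descent) data, so they descend to centers on $X$ contained in $X_{\ge j}$. The blowup of $X$ along such a center is again normal crossings, because blowing up an $\nc(k)$ hypersurface along a smooth center contained in the stratum and transverse to the branches gives normal crossings. It also leaves untouched the locus where $X$ is representable and normal crossings, since destackification is trivial where the inertia is trivial. Root constructions along the descended boundary divisors similarly preserve the normal-crossings property.

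After treating all strata, we check the remaining $K_p$ in the linearized model. Any residual diagonal action of $K_p$ is now through roots of the boundary divisors, which are cut out by the coordinates $x_{k+1},\dots$ transverse to the branches. One more application of the coarse-space-of-root-stack step then removes it, since the coarse space of a root construction along snc divisors is smooth. This leaves $G_p$ embedded in $\Sigma_k$.

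\textbf{Main obstacle.} The hardest part is the compatibility in Step 2. The destackification centers on the normalization must be invariant under the gluing (branch-permutation) data, and the induced blowups must preserve the normal-crossings property of $X$ and compatibility with the strata. We would first try to handle this by applying destackification to the \emph{pair} consisting of the smooth ambient étale-local $Y$ with the divisor $U$. The divisor $U$ is then part of the boundary, and functoriality under étale maps forces the centers to be unions of strata intersections, which makes the descent automatic.
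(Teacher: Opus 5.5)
You correctly identify what has to be removed: the kernel $K_p$ of $G_p\to\Sigma_k$. Globally this is the subgroup $\cG\subset\cI_Y$ acting trivially on the characteristic monoid $\ocM$ of the log structure of $X\subset Y$. But the mechanism of Step~2 has a genuine gap.

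The destackification theorems you cite (Bergh, Bergh--Rydh, Harper) are \emph{absolute} statements about standard pairs, i.e.\ smooth tame stacks with an \emph{ordered} snc boundary. Their output makes the \emph{entire} inertia act through roots of boundary components. This fails in three ways.

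First, neither $\widetilde X_{\ge j}$ with its boundary from deeper strata nor your fallback $(Y,U)$ is a standard pair once stabilizers permute branches. The paper notes explicitly that $X$ cannot serve as the boundary.

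Second, the stabilizers on the pieces are not $K_p$. A point of $X^\nu$ over $p$ has as stabilizer the subgroup of $G_p$ fixing that branch, which may permute the others. So absolute destackification attacks the permutation part, and that part must survive. Take $[\{x_1x_2=0\}/(\ZZ/2\ZZ)]$ with $(x_1,x_2,x_3)\mapsto(x_2,x_1,-x_3)$. Its coarse space is a Whitney umbrella, and Lemma~\ref{unresolve} forbids removing the stabilizer at the origin. Yet $\widetilde X_{\ge 2}=[\AA^1/\bmu_2]$, so destackifying that stratum and then taking coarse spaces would attempt exactly this removal. Your rule of acting only where $G$ does not act through branch permutations cannot be imposed on a global, absolute procedure.

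Third, the final step is unfounded. The residual $K_p$-action need not go through coordinates transverse to the branches. For $\bmu_2$ acting by $-1$ on $x_1,x_2$ in $\{x_1x_2=0\}$, each branch is already a root stack along its boundary point, cut out by the \emph{other} branch coordinate. So $K_p$ survives Step~2 untouched. Moreover, ``taking coarse spaces of the pieces'' is not an operation on $X$ that produces a normal-crossings stack. Separately, \'etale functoriality on the sheets does not make centers descend: the sheets are glued along their boundaries, not by \'etale maps, so images of centers from different sheets through a deeper point need not form a smooth center.

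The missing idea is to coarsen \emph{only} $\cG$ first, and then destackify \emph{relative} to that coarsening. The paper does this as follows.
\begin{enumerate}
\item It embeds $X\subset Y$ \'etale locally as a divisor in a smooth stack, lifting $\cI_{V,p}/m_p^2$ equivariantly by linear reductivity.
\item It sets $\cG=\ker(\cI_Y\to\Aut(\ocM))$, which is independent of the embedding and functorial, and rigidifies it via Rydh's coarsening $Y\to Y\thickslash\cG$.
\item It applies Harper's relative destackification to the pair $(Y,\emptyset)$ relative to $Y\to Y\thickslash\cG$. Because the boundary is empty, the ordered-snc problem disappears.
\end{enumerate}
This yields $Y'$ regular with $Y'\to Y\thickslash\cG$ representable. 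Functoriality makes the exceptional divisor transversal to $X''$, so $X'$ stays normal crossings, and the process is trivial where $X$ is representable. Finally, $\cI_{Y\thickslash\cG}$ acts faithfully on $\ocM$, so by representability $\cI_{X'}$ acts faithfully on the branches. Hence it embeds in $\Sigma_k$, which, given abelian inertia, is exactly the higher pinch point condition.
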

Corollary \ref{Cor:poly-circulant} is proven in Section \ref{Sec:poly-circulant}.

 In Belotto da Silva and Bierstone's  result \cite{BDS-B-nc},  the singularities of the coarse moduli space of $X'$ are restricted to be certain \emph{hypersurface} higher pinch points called \emph{product circulant singularities}; they also control the exceptional locus. In \cite{Wlodarczyk-nc}, W{\l}odarczyk treats a more general class of \emph{non-reduced normal crossings singularities,} again controlling exceptional loci.

%
\begin{problem} Compare the product-circulant singularities of \cite{BDS-B-nc} to Definition \ref{Def:poly-circulant}.
\end{problem}

A project proving semistable reduction of varieties with normal-crossings singularities using Theorem \ref{Th:nc}, and aiming for a streamlined proof of properness of KSBA moduli spaces in characteristic 0, is underway.

\subsection{Characteristic 2}
 We now turn attention to a base field $k$ of characteristic 2. In Section \ref{Sec:inseparable-umbrellas} we briefly discuss the \emph{inseparable umbrella} given by equation \eqref{Eq:pp}. While this is not a true pinch point, it serves to teach us some important lessons, as shown in W{\l}odarczyk's \cite{Wlodarczyk-cobordant}.
A true pinch point, in any characteristic, is the seminormal singularity obtained by gluing a smooth curve $\tilde C$ on the normalization $X^\nor$ to itself by an involution having one fixed point lying over a single point  of a curve $C \subset X$. Statement \eqref{It:wild-pp-stack} below, which we see as a reassuring \emph{positive} statement, follows from this description, see Section \ref{Sec:wild-pp-stack}:

\begin{theorem}\label{Th:wild-pp}
\begin{enumerate}
\item\label{It:wild-pp-stack} Let $X$ be a surface in any characteristic with at most normal crossings and pinch points. Then  $X$ is the coarse moduli space of a normal-crossings stack $\cX$, where $\cX \to X$ is an isomorphism away from the pinch points, with $\ZZ/2\ZZ$ as stabilizers at all pinch points.
\item\label{It:wild-pp-no-tame}
If $\chara(k) =2$ and $X$ has at least one pinch point, then no sequence of weighted blowups gives a NC-preserving stack-theoretic resolution of $X$.
\end{enumerate}
\end{theorem}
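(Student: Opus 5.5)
For part \eqref{It:wild-pp-stack} I would work directly from the seminormal description of a pinch point. Let $\nu\colon X^\nor\to X$ be the normalization, with conductor curve $\tilde C\subset X^\nor$ mapping to the double curve $C\subset X$. Near a pinch point $p$, the map $\tilde C\to C$ is the quotient by an involution $\sigma$ with a single fixed point $\tilde p$ over $p$. I would build $\cX$ étale locally and then glue. Since the construction below is canonical, the local pieces glue; the gluing data are unique because the resulting stacks have trivial automorphisms over $X\setminus\{\text{pinch points}\}$.

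Near $p$ the plan is to take the double cover $\tilde C\to C$ and extend it to a double cover $U'\to U$ of a neighborhood $U$ of $p$ that is branched only at $p$. On $U'$ the two branches of $X$ along $C$ are separated by the monodromy of $\tilde C\to C$: pulling back along $\tilde C\to C$ trivializes the "branch-swapping" torsor. The result is that $U'$ has two smooth branches meeting transversally along a curve, so $U'$ is an $\nc(2)$ surface. In addition $\ZZ/2$ acts on $U'$ swapping the branches, and this action is free away from the preimage of $p$. I then set $\cX|_U=[U'/(\ZZ/2)]$. Its coarse space is $U'/(\ZZ/2)$, and I must identify this with $U$. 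To do so I would check that $U'/(\ZZ/2)$ is seminormal, has normalization $X^\nor|_U$, and has the same gluing $\tilde C\to C$; seminormal surfaces are determined by exactly these data.

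In characteristic $2$ the cover $U'\to U$ is a wild Artin--Schreier-type cover, so I would write it explicitly rather than as a Kummer cover. Concretely I would present $\cX$ near $p$ as a quotient of $\{uv=0\}\subset\AA^3$ by an involution exchanging $u$ and $v$. I would not insist that this quotient be tame: a stack with stabilizer $\ZZ/2$ in characteristic $2$ is still fine as long as the coarse space is correct. The remaining check is that the invariant ring is the local ring of the pinch point, which I verify by computing invariants of $k[u,v,w]/(uv)$ under the chosen involution.

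For part \eqref{It:wild-pp-no-tame} I would adapt Kollár's argument for Lemma \ref{Lem:unresolve-intro}. Suppose $\cX'\to X$ is a composite of weighted blowups that is an isomorphism over the nc locus and has $\cX'$ normal crossings. Over $X\setminus\{p\}$ nothing changes, so the two branches along $C$ near $p$ are permuted by the monodromy of $\tilde C\setminus\tilde p\to C\setminus p$. The key point is that in characteristic $2$ this monodromy is wild: $\tilde C\to C$ is an Artin--Schreier double cover ramified at $\tilde p$. Stack-theoretic weighted blowups only introduce stabilizers $\bmu_n$, which are diagonalizable, so their inertia and local fundamental groups near exceptional points are tame, or else infinitesimal. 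A normal-crossings stack $\cX'$ has branches along the double curve that are locally well defined up to the action of the stabilizer.

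I would then follow the strict transform $C'$ of $C$ to a point $q$ over $p$. At $q$ the branch-swapping monodromy around $q$ must be realized through the local structure, namely through the stabilizer $\bmu_n$ together with the étale local nc chart. This would force the wild cover $\tilde C\to C$ to become trivial after a tame (Kummer) base change of $C$ at $p$, which is impossible for a wildly ramified $\ZZ/2$-cover. I expect this step to be the main obstacle. It requires showing that the branch torsor on $C'\setminus q$ extends over $q$ after passing to the chart, and that weighted blowups only produce Kummer-type local structure along $C'$, including when $C'$ lies in the exceptional locus or $q$ is a stacky point with $\bmu_{2^a}$ (infinitesimal) stabilizer. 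I would handle this by a local computation on the weighted blowup charts.
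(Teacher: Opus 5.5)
\textbf{Part (1) has a genuine gap.} You never construct the double cover $U'\to U$, and the one mechanism you point to does not give a normal-crossings surface. Implement ``pulling back along $\tC\to C$'' in the natural way, by base-changing $\tC\to C$ along a local retraction $U\to C$, $(x,y,z)\mapsto z$. For $X_n$, using $z=w^2+wz^n$ on $\tC$, you get the surface $(x+yw)(x+yw+yz^n)=0$ in the smooth threefold $\tC\times\AA^2_{x,y}$.
\begin{itemize}
\item Both branches have tangent plane $x=0$ at the origin.
\item They stay tangent along the whole line $x=w=0$, which lies over the curve $\{x=z=0\}\subset X$.
\item The cover is branched along that whole curve, not only at $p$.
\end{itemize}
So the result is neither normal crossings nor branched only at $p$. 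Already in characteristic $0$ this recipe gives $x^2=y^2w^2$, which is not normal crossings at the origin. Separating the branches by monodromy over $C\setminus\{p\}$ says nothing about transversality at the point over $p$, and that is exactly what has to be proved.

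The missing idea is the paper's construction. Glue two copies of the normalization along $\tC$ via $\sigma$: $Y=X^\nor\cup_{\tC}X^\nor$, i.e.\ $\cO(Y)=\{(f_1,f_2)\in\cO(X^\nor)^2 : f_1|_{\tC}=\sigma^*(f_2|_{\tC})\}$. The swap $(f_1,f_2)\mapsto(f_2,f_1)$ is well defined and restricts to $\sigma$ on the double curve. This $Y\to X$ is exactly the cover you want: a $\ZZ/2\ZZ$-torsor off $p$ whose restriction over $C$ is $\tC\to C$. Moreover:
\begin{itemize}
\item $Y$ is normal crossings by construction, being two smooth surfaces glued along smooth curves.
\item The action is free off $\tilde p$.
\item The coarse space is identified formally in every characteristic: the invariants are $\{f\in\cO(X^\nor): f|_{\tC}\in\cO(\tC)^\sigma=\cO(C)\}=\cO(X)$. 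So you need no computation of invariants of a non-linearly-reductive action.
\item The construction is Zariski local around each isolated pinch point, so you also need no \'etale gluing argument.
\end{itemize}
Your model $\{uv=0\}$ with $u\leftrightarrow v$ has the right shape. It is the paper's explicit $T$, with involution $(w',y_1,y_2)\mapsto(w'+z^n,y_2,y_1)$ and map $(x,y)=(w'(y_1+y_2)+z^ny_2,\,y_1+y_2)$. But until you specify the action on the third coordinate and the map to $X$, part (1) is not proved.

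\textbf{Part (2) is the paper's argument} (Proposition~\ref{tameunresolve}). The step you flag as the main obstacle is handled there without chart computations.
\begin{itemize}
\item The extension of the branch cover over $q$ is automatic. If $\cX'$ is normal crossings, its normalization restricts to an \'etale double cover $\tC'\to C'$ of the strict transform. This is an \'etale-local statement, obvious in the simple normal-crossings case.
\item Instead of analysing weighted blowup charts, the paper only uses that $C'$ is a tame stack isomorphic to $C$ away from $p$. By \cite[Theorem 3.2]{AOV1}, \'etale locally on $C$ it is $[\Spec\cO_C[\eta]/(\eta^m-z)/\bmu_m]$, a root stack at $p$.
\end{itemize}
The restriction to $C\setminus\{p\}$ of an \'etale $\ZZ/2\ZZ$-cover of such a stack is unramified at $p$. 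The odd part of $m$ allows only odd tame ramification, and the $\bmu_{2^a}$ part is infinitesimal. This is your ``a tame base change cannot kill wild ramification.'' Only this $\ZZ/2\ZZ$-coefficient version of the paper's Claim is needed, and it is the version that holds when $m$ has an odd factor. The commutative square of $\Hom(\pi_1(-),\ZZ/2\ZZ)$ then contradicts the wildness of $\tC\to C$.

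This argument covers any tame modification, not only weighted blowups. The case you worry about, with $C'$ inside the exceptional locus, cannot arise, since $C'$ is the closure of $C\setminus\{p\}$.
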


Statement \eqref{It:wild-pp-no-tame} above is a new \emph{negative} statement showing that a type of resolution of singularities that holds in characteristic 0 simply does not hold in characteristic 2. In essence it follows since weighted blowups are all \emph{tame} stacks in the sense of \cite{AOV1}, with an argument  analogous to that of Lemma \ref{Lem:unresolve-intro}. See Section \ref{Sec:wild-pp-no-tame}.

\subsection{Classifying wild umbrellas}

The discussion above relies on abstract definitions and requires no explicit equations. A closer understanding the situation can be gained after passing to local fields and is discussed in Section \ref{Sec:wild-umbrellas} below. Assuming $k$ is algebraically closed, there is a sequence of distinct relevant pinch points, given by local equations as follows:

\begin{definition} Fix an integer $n\geq 1$. The \emph{$n$-th wild umbrella} is the surface $X_n\subset \AA^3$ given by
\begin{equation} \label{Eq:pp(n)} x^2 + y^2z + xyz^n=0\end{equation}
\end{definition}

Assuming one believes in \emph{embedded} resolution in positive characteristics, Theorem \ref{Th:wild-pp} suggests that one should seek to answer the following:
\begin{problem}\label{Prob:embedded}  Let $X \subset Y= \AA^3$ be an embedded surface with at most pinch points in characteristic 2. Find a stack-theoretic modification $\cY' \to Y$, with $\cY'$ smooth, which is an isomorphism away from the pinch points of $X$, inducing a NC-preserving resolution $\cX'\to X$.
\end{problem}

See further discussion focussing on $X_n$ of Equation \eqref{Eq:pp(n)} in section \ref{Sec:ambient-challenge}.

\subsection*{Acknowledgements} {We thank Andr\'e Belotto da Silva, Edward Bierstone, and Alberto Landi for their interest in the paper, and for pointing out serious errors in earlier versions.}

\section{The true tragic story of Whitney's umbrella}\label{Sec:umbrellas}

\subsection{The classical Whitney umbrella}

\subsubsection{The umbrella and the singular locus}\label{Zsec}
Let $k$ be a field such that $\chara(k)\neq 2$ and let $X=\Spec(k[x,y,z]/(x^2-y^2z)$ be the classical Whitney umbrella. The $z$-axis $C=V(x,y)$ is its singular locus and the origin $p=V(x,y,z)$ is the most singular point of $X$, the pinch point. Note that $X$ is a normal crossings singularity at any point of $C\setminus \{p\}$.

\subsubsection{The singularity is not improved under a standard blopwup}\label{Sec:unimprovable}
The Whitney umbrella provides a classical (in fact, the simplest) example of a singularity, which is not improved by a naive attack: if one blows up the pinch point, it shows up again on the new variety. Indeed, $X=V(x^2-y^2z)$ is a divisor in $Y=\Spec(k[x,y,z])$ and blowing up the origin and looking at the $z$-chart $Y'_z=\Spec(k[x',y',z])$, where $x'=\frac{x}{z}$, $y'=\frac{y}{z}$ we obtain that the full pullback is $V(z^2(x'^2-y'^2z))$ and the strict transform is $X'=V(x'^2-y'^2z)$, which is precisely the same Whitney umbrella. In particular, the singularity re-appears after the blowing up.

\subsubsection{What one does instead}
 In the classical principalization, which uses blowings up at smooth centers, an informal conclusion is that one cannot just improve the worst singularities each time, and any method must use some memory of the state of the algorithm. In particular, in the case of the Whitney umbrella one first blows up the pinch point, and only then blows up the whole $z$-axis on the transform, getting rid of the pinch point along with the original normal-crossings points. After the first blowup, the new $z$-axis $x'=y'=0$ has its pinch point on the exceptional divisor. An algorithm taking account of  history views such an exceptionally marked  pinch point ``as singular as normal crossings'' and blows them up together.

\subsubsection{It gets even worse} The argument in Section \ref{Sec:unimprovable} is quite convincing, but does not rule out existence of a ``very smart'' algorithm, which detects the difficulty and blows up the $z$-axis at once. This is not to be, as further examples   noticed by {Koll\'ar \cite[Example 3.6.2]{Kollar} and W{\l}odarczyk \cite{ATW-weighted, Wlodarczyk-cobordant} show}. Essentially the same computation shows that the singularity of $W=V(x^2-yzt)$ in $\bbA^4_k$ also persists after a blowing up.  Its singular locus $V(x,yz,yt,zt)$  is $S_3$-invariant, and the origin $p$ is the only smooth $S_3$-invariant subvariety in the singular locus that contains $p$. Therefore there exists no {\em functorial} memoryless algorithm which blows up only smooth centers.

\subsubsection{The normalization}
We denote by $\tilX$ the normalization of $X$, set $\tilC=C\times_X\tilX$ and let $\tilp$ be the preimage of $p$ in $\tilC$.

\begin{lemma}\label{norlem}
  Keep the above notation, then
  \begin{itemize}
	\item $\tilX=\Spec(k[y,w])$, where $w=\frac{x}{y}$ with $w^2=z$.
	\item $\tilC=\Spec(k[w])$ is a double cover of $C=\Spec(k[z])$, which is ramified at $\tilp$.
  \end{itemize}
\end{lemma}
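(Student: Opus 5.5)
Set $A=k[x,y,z]/(x^2-y^2z)$. I would compute the normalization directly by exhibiting an explicit finite birational extension of $A$ that is a polynomial ring. In the fraction field of $A$ put $w=x/y$. Then $w^2=x^2/y^2=z$, so $w$ satisfies the monic equation $T^2-z=0$ over $A$ and is therefore integral over $A$.

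The map $k[y,w]\to A[w]$ is given by $y\mapsto y$ and $w\mapsto w$. Its inverse is obtained by sending $x\mapsto yw$, $y\mapsto y$, $z\mapsto w^2$. This respects the relation, since $(yw)^2-y^2w^2=0$. Hence there is a ring map $A\to k[y,w]$, and it lands in $A[w]$ because $x=yw$ and $z=w^2$ there.

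I then check that $A\to k[y,w]$ is injective. This is the one step needing care, and I expect it to be the main obstacle, though it is mild. First, $A$ is a domain: $x^2-y^2z$ is irreducible, since as a polynomial in $x$ over $k[y,z]$ it is monic of degree 2 and $y^2z$ is not a square. Second, both rings have dimension 2. So the kernel of $A\to k[y,w]$ is a prime whose quotient is 2-dimensional; it must therefore be zero, using that the image $k[yw,y,w^2]$ has fraction field $k(y,w)$ of transcendence degree 2.

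It follows that $A\subset k[y,w]\subset \mathrm{Frac}(A)$, with $k[y,w]$ finite over $A$: it is generated by $1$ and $w$ as a module over $k[y,z]\subset A$. Since $k[y,w]$ is a UFD, it is integrally closed, and so $\tilX=\Spec k[y,w]$. Moreover, $\mathrm{Frac}(A)=k(y,w)$ because $w=x/y$.

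For the second bullet, $\tilC=C\times_X\tilX$ is cut out in $k[y,w]$ by the ideal generated by the images of $x$ and $y$, namely $(yw,y)=(y)$. Hence $\tilC=\Spec k[w]$. The map $\tilC\to C=\Spec k[z]$ is $z\mapsto w^2$, which is finite of degree 2. Since $\chara(k)\neq 2$, the polynomial $w^2-z$ is separable, and the cover is étale exactly where the derivative $2w$ is invertible, i.e. away from $w=0$. Thus it is ramified precisely at $w=0$, which is the preimage $\tilp$ of $z=0$, i.e. of $p$.
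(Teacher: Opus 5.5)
Your proof is correct and follows the same route as the paper: you exhibit $k[y,w]$ as a normal domain that is integral over $k[x,y,z]/(x^2-y^2z)$ with the same fraction field, and then read off $\tilC=V(y)=\Spec k[w]\to\Spec k[z]$, $z\mapsto w^2$, which is ramified only at $w=0$. You also supply details the paper leaves implicit, namely the injectivity of $A\to k[y,w]$, its finiteness, and the use of $\chara(k)\neq 2$ in the ramification computation.
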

\begin{proof}
The normal domain $k[y,w]$ is integral over $k[x,y,z]/(x^2-y^2z)$ and has the same field of fractions. This implies the first claim, and the second claim follows easily.
\end{proof}

\subsubsection{Unresolvability}\label{Sec:unresolve}
Set $X_0=X\setminus\{p\}$, $\tilX_0=\tilX\setminus\{\tilp\}$, $C_0=C\setminus\{p\}$ and $\tilC_0=\tilC\setminus\{\tilp\}$. The surface $X_0$ is normal crossings, so its normalization $\tilX_0\to X_0$ induces an \'etale covering $\tilC_0\setminus C_0$ of the singular locus. However, $X_0$ is not a \emph{simple} normal crossings surface, so the cover $\tilC_0\to C_0$ does not have to be split, and this indeed happens in our case. Moreover, it degenerates to a branched cover over the partial compactification $C$. This has the following consequence.

\begin{lemma}[Koll\'ar {\cite[\S 8]{Kollar-semilog}}]\label{unresolve}
Keep the above notation. Let $X'$ be a variety and $X'\to X$ be a modification which is an isomorphism outside of $p$. Then $p$ possesses a unique preimage $p'$ on the strict transform $C'$ of $C$ and $X'$ is not normal crossings at $p'$. In particular, there exists no NC-preserving resolution  $X'\to X$ by a variety.
\end{lemma}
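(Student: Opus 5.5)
The plan is to compare the normalization of $X'$ with that of $X$ over a neighbourhood of $C'$, and use the ramification of $\tilC\to C$ at $\tilp$ from Lemma \ref{norlem}.

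\textbf{Uniqueness of $p'$.} Since $X'\to X$ is a modification that is an isomorphism over $X_0$, the strict transform $C'$ is the closure of $C_0$ in $X'$. The morphism $C'\to C$ is proper and birational, and $C=\Spec(k[z])$ is a smooth curve. Hence $C'$ is irreducible and $C'\to C$ is finite and birational. Its restriction to the normalization of $C'$ is therefore an isomorphism, so $C'\to C$ is a homeomorphism. In particular the fibre over $p$ is a single point $p'$.

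\textbf{$X'$ is not normal crossings at $p'$.} Suppose, for a contradiction, that $X'$ is normal crossings at $p'$. I would argue in three steps.
\begin{enumerate}
\item Near $p'$, the curve $C'$ lies in the singular locus of $X'$, because $C_0$ is the double locus of $X_0$ and the singular locus is closed. Since $C'$ is one-dimensional through $p'$ and $X'$ is a surface, $p'$ is either an $\nc(2)$ point with $C'$ equal to the double curve, or an $\nc(3)$ point with $C'$ one of the three double curves.
\item Let $\tilX'$ be the normalization of $X'$ and $\tilC'=C'\times_{X'}\tilX'$. At an $\nc(2)$ or $\nc(3)$ point the normalization is a disjoint union of smooth branches. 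Locally along the relevant double curve, the preimage of that curve is two smooth curve germs, each mapping isomorphically onto $C'$ (which is then smooth at $p'$). So $\tilC'\to C'$ is étale of degree 2 over a neighbourhood of $p'$.
\item Normalization commutes with the isomorphism over $X_0$, so over $C_0$ the cover $\tilC'\to C'$ agrees with $\tilC_0\to C_0$. Any finite cover of $C'\cong C$ near $p$ that agrees generically with $\tilC\to C$ has the same normalization, namely $\tilC$ near $\tilp$, because the normalization of a curve cover is determined by its function field. Since $\tilC'$ is smooth by step 2, $\tilC'\cong\tilC$ over $C$ near $p$.
\end{enumerate}
By Lemma \ref{norlem}, $\tilC\to C$ is ramified at $\tilp$, whereas step 2 says it is étale there. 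This is the desired contradiction.

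\textbf{Expected obstacle.} The main difficulty is step 2 in the $\nc(3)$ case: one must check that, restricted to $C'$ alone, the preimage in the normalization consists of exactly two components, each smooth and mapping isomorphically onto $C'$. A cleaner alternative is to use the local model $y_1y_2y_3=0$ (or $y_1y_2=0$) after an étale base change and pass to the strict henselization. The étale-local splitting of $\tilC'$ over $C'$ into two sections then directly contradicts the fact that $\tilC$ over the henselization of $C$ at $p$ is irreducible. This irreducibility holds because $w^2=z$ stays irreducible over $k[z]^h$ in characteristic $\neq2$.

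The final assertion of the lemma follows immediately: any NC-preserving resolution is an isomorphism over the normal-crossings locus $X_0$, so it is a modification of the type considered, and is therefore not normal crossings at $p'$.
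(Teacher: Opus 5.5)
Your proposal is correct and follows essentially the paper's proof: $p'$ is unique because $C'\to C$ is finite birational onto the normal curve $C$, and $X'$ is not normal crossings at $p'$ because normal crossings would make the normalization restrict to an étale double cover of $C'\cong C$, which by uniqueness of the normal model must coincide with the ramified cover $\tilC\to C$ of Lemma~\ref{norlem}. Your extra care about the $\nc(3)$ case is warranted: there the preimage of $C'$ in the normalization also contains an isolated point on the third sheet, so one should work with the closure of $\tilC_0$, a point the paper's one-line étale-local check glosses over.
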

\begin{proof}
Note that if $X$ is a normal crossings surface and $C$ is a \emph{smooth} irreducible component of its singular locus, then the normalization of $X$ restricts to an \'etale cover of $C$. Indeed, this claim is \'etale local, hence it suffices to check it in the simple normal-crossings case, which is trivial.

In the situation of the lemma, the map $C'\to C$ is an isomorphism over $C_0$ and $p$ is normal, hence $C'=C$ and $p$ has a unique preimage $p'$ in $C'$. By Lemma~\ref{norlem} the normalization map $\tilX'\to X'$ restricts to a non-split double cover $\tilC_0\to C_0$ over $C_0$ whose extension to the whole $C$ is branched over $p'$. Therefore, $X'$ is not normal crossings at $p'$.
\end{proof}

The lemma explains why the pinch point reappears on the $z$-chart of the blowing up $X'=\Bl_p(X)$, which is the chart containing $p'\in C'$. In fact, this is the best that can happen to such a point under a modification if $C_0$ is kept unchanged.

\subsubsection{Weighted resolvability}
The situation improves once one extends the category of varieties to orbifolds (or DM stacks). Let us consider how weighted resolution algorithms work with the Whitney umbrella. The singularity invariant of \cite{ATW-weighted} is $(2,2)$ at the normal crossings points and $(2,3,3)$ at the pinch point. The weighted algorithm improves the invariant at each step, hence it must get rid of the pinch point, and, indeed, the first blow up is associated to the center $\cJ=(x^2,y^3,z^3)$ supported on the pinch point and $\cX'=\Bl_{\overline \cJ}(X)$ is a normal crossings orbifold with the maximal invariant equal to $(2,2)$.

Let us show the relevant computation. Once again, the $z$-chart is the important one because it contains $p'$. This time the new coordinates on the corresponding \'etale-local chart $Y'_z=\Spec(k[x',y',z'])$ are related to the old ones by $x'=\frac{x}{z'^3}$, $y'=\frac{y}{z'^2}$ and $z=z'^2$, so the pullback of $X$ is given in  by $z'^6(x'^2-y'^2)$, and the $z$-chart of the strict transform $X'_z$ is given by $x'^2-y'^2=0$. In particular, the strict transform is indeed a normal crossings surface.

\subsubsection{The stack structure}\label{Sec:stacky-structure}
Finally, let us also describe the stack structure. The $z$-chart stack is obtained by dividing the \'etale chart by the $\bmu_2$ action, which sends $(x',y',z')$ to $(-x',y',-z')$. Equivalently, $x',y',z'$ are homogeneous of weights $1,0,1$ in $\ZZ/2\ZZ$.\footnote{This formulation holds also in characteristic 2, but ill suited to the singularities, as $\bmu_2$-torsors are not smooth.} The  $z$-chart stack of the ambient space is $\cY'_z=[Y'_z/\bmu_2]$, so its coarse space is $\underline{\cY'_z} = \Spec(k[x'^2,x'z',z'^2,y'])$. The nontrivial stack structure with inertia $\bmu_2$ is concentrated on the $y'$-axis $V(x',z')$, which describes the singular locus of $\underline{\cY'_z}$, providing a stack theoretic resolution of the coarse space.

The $z$-chart of the weighted blowing up of $X$ is the closed substack $\cX'_z=[X'_z/\bmu_2]$ of $\cY'_z$. It meets the $y'$-axis, where the inertia jumps to $\bmu_2$ at the point $x'=y'=z'=0$. The coarse space is easily seen to be equal to $X'_z/\bmu_2=\Spec(k[x'',y',z]/(x''^2-y'^2z))$, where $x''=x'z'=\frac{x}{z}$. It is embedded in $\underline{\cY'_z} = \Spec(k[x'^2,x'z',z'^2,y']) = \Spec(k[s,x'',z,y'])/(sz-x''^2)$ by setting $s = y'^2$. Thus, the coarse space is (the $z$-chart of) the usual Whitney umbrella obtained by blowing up the classical center $(x,y,z)$. The nontrivial stack structure along the ambient $y'$-axis inserts a stack structure at the origin $x''=y'=z=0$ and resolves it to a normal crossings orbifold.

\subsubsection{An alternative modification}\label{Sec:stacky-structure-alternate}
\label{stackyrem}
One can also provide a normal crossings stack modification of $X$ using the weighted blow up associated to the center $(x^{1/2},z)$ supported along the $y$ axis.  On the ambient space $Y$, the $y$-axis is replaced by a $(1,2)$-weighted $\PP^1$-bundle with $\bmu_2$ stabilizers along the intersection of the exceptional divisor with the $yz$-plane. The proper transform of the surface $X$ meets this locus at $y=0$. It is precisely the stack of Theorem \ref{Th:wild-pp}\eqref{It:wild-pp-stack}, whose coarse moduli space is $X$.

\subsubsection{Resolving the cover}
Now, we can conceptually explain how an unsolvable problem became solvable in the orbifold world, circumventing the obstruction met in Koll\'ar's proof. Let us focus  on the curve $C = \Spec k[z]$. In either the procedure of Section \ref{Sec:stacky-structure} or \ref{Sec:stacky-structure-alternate} it is replaced by the root stack $C'=\left[\Spec k[z^{1/2}] \ /\ \bmu_2\right]$  at $z=0$. The ramified double cover $\tilde C \to C$, where  $\tilde C =\Spec k[z^{1/2}]$, now factors through an \emph{unramified} double cover $\tilde C \to C'$.


To summarize, the miracle of \emph{weighted} NC-preserving resolution is possible because the stack structure of $X'$ induces the stack structure $C'$ on $C$, which is precisely the stack structure  induced from the branched cover $\tilC\to C$.

\subsection{Inseparable umbrellas in characteristic 2}\label{Sec:inseparable-umbrellas}
In this section we assume that $\chara(k)=2$ and study analogues of the Whitney umbrella. Due to the characteristic assumption there are no minus signs in this section.

\subsubsection{Inseparable umbrellas and their invariants}
The same formula $X=V(x^2+y^2z)$ defines a surface whose singular locus $C=V(x,y)$ is not generically normal crossings. In fact, $X$ is equisingular along $C$ because there exists a $\GG_a$ action on $X$ which induces the usual additive action on $C$. It is defined by $a(x,y,z)=(x+ay,y,z+a^2)$.
In a sense the whole $C$ consists of inseparable pinch points.

This example is due to W{\l}odarczyk \cite{Wlodarczyk-cobordant}, and it was used to conclude that a weighted resolution algorithm should re-define the invariant. How this might be done remains a challenge:

If $k$ is perfect, and one defines the weighted invariant  (or multiorder, \cite{Temkin-weighted-excellent}) of an ideal similarly to the characteristic zero case of \cite{ATW-weighted} in terms of regular parameters at a given point, then the \emph{generic} point of $C$ has invariant $(2,2)$ and any closed point has invariant $(2,3,3)$, so the invariant is not upper semicontinuous.

If instead one defines the weighted invariant at the \emph{geometric} generic point $\Spec \overline{k(z)} \to C \subset X$, interpreted  as a \emph{closed} point $(0,0,z)$ of the base change $X_{ \overline{k(z)}}$, the invariant remains $(2,3,3)$, since $\overline{k(z)} $ is perfect. This approach is consistent with the interpretation of the invariant in \cite{Temkin-weighted-excellent} in terms of embeddings of \emph{tube schemes}, an approach that is inherently stable under such base change and makes the invariant upper-semicontinuous on \emph{geometric} points.
The maximal locus here is the $z$-axis, which is indeed a good blow-up center.

{This however does not lead to a resolution algorithm in general.}
 The non-reduced truncation $V(x^2+y^2z, z^{100})$ is an example of a scheme where the maximal locus is even non-reduced. {The wild umbrellas with $n>1$ are reduced examples with non-reduced maximal locus.}  Wlodarczyk gives examples with maximal locus having arbitrary singularities.



One can also imagine the possibility that a version of a basic weighted algorithm just does not exist in positive characteristic, and a resolution invariant should be of a more complicated nature, taking inseparability and wild ramification issues into account.


\subsubsection{Behavior of the inseparable umbrella under weighted blowups}\label{Sec:inseparable-blowup} Another phenomenon worth notice is the behavior of the singularity under the natural weighted blowup associated to the center $(x^2,y^3,z^3)$ at the origin.

Writing the equations $x = s^3x', y=s^2y', z = s^2z'$ of the degeneration to the normal cone, the proper transform has equation
$x'^2 = y'^2z'$. We learn three lessons from this:
\begin{enumerate}
 \item The singularity invariant does not increase: singularities on the weighted normal cone are no worse than the original singularities. This is automatic for weighted homogeneous singularities; but
 \item unlike the characteristic 0 case, the invariant does not drop on blowing up. After all, the same invariant is taken along all the $z$ axis, which is not modified.
 \item\label{Item:mu2charts} Finally, unlike in characteristic 0, there are no \'etale charts to use here, and \emph{flat charts which are not smooth are useless.}

 The flat chart given by the slice $\{z'=1\}$, where the equation is $x'^2 = y'^2$ with $\bmu_2$ acting with weights $1$ on $x'$ and $0$ on $y'$, is misleading: the singularity of a $\bmu_2$-torsor $T$ over a stack $Z$ is typically worse than that of $Z$.
 In this case the invariant $(2)$ of $T := \{x'^2 = y'^2\}$ is larger than the invariant $(2,3,3)$ of $Z:=[\{x'^2 = y'^2z'\} / \GG_m]$.
 \end{enumerate}

\subsection{Wild umbrellas in characteristic 2}\label{Sec:wild-umbrellas}
\subsubsection{Wild quadratic extensions in characteristic two}\label{quadsec}
Before going further let us briefly recall a few basic facts about wild quadratic extensions. To give a broader perspective, a more detailed and illuminating discussion of basic facts (in any characteristic) is given in the appendix, but here we consider only a quadratic extension $K'/K$ of discrete valued fields of characteristic zero with rings of integers $\cO'/\cO$. Also, we assume that $e_{K'/K}=2$. In this case, the different $\delta=\delta_{K'/K}$ of the extension is a positive even number and $\cO'=\cO[\pi']$, where $\pi'$ is a uniformizer with the minimal polynomial of the form $f_{\pi'}=t^2+u\pi^{\delta/2}t+\pi$ with a uniformizer $\pi\in\cO$ and a unit $u\in\cO^\times$.

Moreover, if the residue field $k=\cO/m_\cO$ is algebraically closed, then up to completion and automorphisms of both $K$ and $K'$ the extension is determined by the different, and one can take $u=1$. Namely, $\hatcO=k\llbracket\pi\rrbracket$, $\hatcO'=k\llbracket\pi'\rrbracket$ and $\pi'^2+\pi^{\delta/2}\pi'+\pi=0$ for an appropriate choice of uniformizers. Alternatively, one can choose uniformizers of $\hatK$ and $\hatK'$ so that $\pi=\pi'^2+\pi'^{\delta+1}$.

\subsubsection{The wild umbrellas}
Now we define a sequence of wild umbrellas by  formula \eqref{Eq:pp(n)} (page \pageref{Eq:pp(n)}): $$X_n=V(x^2+y^2z+xyz^n).$$ To simplify notation fix $n$ and set $X=X_n$. As earlier, the singular locus is the $z$-axis $C=V(x,y)$, but this time the quadratic form is non-degenerate at any point of $C_0=C\setminus\{p\}$, so similarly to the usual Whitney umbrella there is a pinch point at the origin and all other singularities are normal crossings singularities.

\subsubsection{Persistence of the singularity}\label{perssec}
The equations $f(x,y,z) = x^2+y^2z+xyz^n$ are homogeneous of degree 2 in $x,y$, hence the same computation as with the classical Whitney umbrella shows that after blowing up the origin the strict transform in the $z$-chart is given by the same polynomial $f(x',y',z)=x'^2+y'^2z+x'y'z^n$. Of course the blowup of the singular locus $V(x,y)$ resolves the singularities, but is not  NC-preserving.

Let us now describe the weighted blowing up associated to the center $(x^2,y^3,z^3)$, 
which is easily seen to be the weighted $\cI_X$-admissible center of maximal possible multiorder. As explained in  \ref{Sec:inseparable-blowup}\eqref{Item:mu2charts} above, one must not use $\bmu_2$-charts as they fail to describe singularities faithfully. We use instead the degeneration to the normal cone $x = s^3x', y = s^2y', z=s^2z'$. The proper transform of $f(x,y,z)$ is
\begin{equation} \label{Eq:wild-cone} x'^2+y'^2z'+s^{2n-1}x'y'z'^n.\end{equation}


{These are equisingular along $V(s,x',y')$, but this locus does not support a center of weighted blowup, similar to the inseparable umbrella.}

\subsubsection{Umbrellas as coarse moduli spaces}\label{Sec:wild-pp-stack} Consider a general pinch point \emph{in arbitrary characteristic} --- a singularity $p \in C \in X$, where $C$ is a smooth curve, whose preimage on the normalization $X^\nor \to X$ is a double cover $\tC \to C$ by a smooth curve with branch locus $p\in C$. Let $\sigma\in \Aut(\tC)$ be the involution associated to the double cover.

Let $Y = X^\nor \cup_\tC X^\nor$, where the gluing map is given by $\sigma$. Its normalization is $\tY = X^\nor \sqcup X^\nor$, on which we have a natural involution $\tilde \tau$ switching the components. This descends to an involution $\tau$ on $Y$, which restricts to $\sigma$ on $\tC$.

\begin{proof}[Proof of Theorem \ref{Th:wild-pp} Part \ref{It:wild-pp-stack}]
Since pinch points are isolated, it suffices to consider one pinch point at a time, and glue things together.

Note that $X = Y/(\ZZ/2\ZZ)$, and the action is free away from the point $p$. Taking the stack quotient $\cX = [Y\,/\,(\ZZ/2\ZZ)]$, we have that $\cX \to X$ is the morphism to the coarse moduli space, an isomorphism away from $p$. Finally $Y$ is a normal crossings surface by construction.
\end{proof}

\subsubsection{Wild umbrellas and weighted blowups}\label{Sec:wild-pp-no-tame}  Specializing again to characteristic 2, the cover $\tC \to C$ is necessarily wild at the point $p$.

Theorem \ref{Th:wild-pp} Part \ref{It:wild-pp-no-tame} now follows from:
\begin{proposition}\label{tameunresolve}
Let $X$ be a wild Whitney umbrella. Then for any stack theoretic modification $f\:X'\to X$ such that $X'$ is a tame stack and $f$ is an isomorphism over $X\setminus\{p\}$, the stack $X'$ is not normal crossings.
\end{proposition}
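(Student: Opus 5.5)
We argue as in Lemma~\ref{unresolve}, replacing ``the cover $\tC_0\to C_0$ extends unramified over $p'$'' by ``the cover extends to a finite étale cover of a tame stacky curve''. Wild ramification obstructs the latter.

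Suppose, for contradiction, that $X'$ is a tame normal-crossings stack. Let $C'\subset X'$ be the closure of $C_0=C\setminus\{p\}$, taken with its reduced structure. We first establish two facts about $C'$.
\begin{itemize}
\item $C'\to C$ is proper and an isomorphism over $C_0$.
\item $C'$ is a tame stacky curve.
\end{itemize}
Normalizing if necessary, we may assume $C'$ is a normal tame one-dimensional stack, birational to $C$ and proper over it. Its coarse space is then the normal curve $C$, and near any preimage $p'$ of $p$ it has the local form $[\Spec \cO_{C,p}^{sh}[t]/(t^e-u\pi)/\bmu_e]$, with $e$ prime to $2$, or more generally a tame stabilizer. Étale locally, a tame stack is a quotient by a linearly reductive group. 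The stabilizer of $C'$ at $p'$ is a subgroup of the stabilizer $G_{p'}$ of $X'$ at $p'$, so it is linearly reductive as well. Acting faithfully on a regular one-dimensional strictly henselian germ, it is therefore cyclic of order prime to $2$, i.e.\ $\bmu_e$ with $2\nmid e$.

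Next we use the normal-crossings hypothesis. Since $X'$ is normal crossings and $C'$ contains the generic double locus, the branches of $X'$ along $C'$ are well defined. Étale locally near $p'$, on an atlas $U\to X'$, the argument of Lemma~\ref{unresolve} applies: the normalization of a normal-crossings scheme restricts to an étale cover over a smooth component of the singular locus. One caveat is that $C'\times_{X'}U$ may acquire several branches (an $\mathrm{nc}(3)$ point), in which case we take the branch through the chosen point. We conclude that the normalization $\widetilde{X'}\to X'$ induces a representable finite étale double cover $\tC'\to C'$ extending $\tC_0\to C_0$. Here normalization is compatible with the étale atlas, so this descends to the stack.

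Now pass to the local field at $p$. Let $K$ be the fraction field of $\hatcO_{C,p}$. The cover $\tC\to C$ gives a ramified quadratic extension $K'/K$ which, in characteristic $2$, is wild by \S\ref{quadsec}. On the other hand, the existence of $\tC'$ shows that $K'/K$ becomes unramified after base change along the $\bmu_e$-root cover $K(\pi^{1/e})/K$, because finite étale covers of $[\Spec k\llbracket t\rrbracket/\bmu_e]$ pull back to finite étale covers of $\Spec k\llbracket t\rrbracket$. It follows that $K'\subset K(\pi^{1/e})^{\rm unr}$, which is tamely ramified over $K$. Hence $K'/K$ is tame, contradicting wildness, since a degree-$2$ extension with $e=2$ in characteristic $2$ is wild.

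The main obstacle is the structural step, namely making rigorous that tameness of $X'$ forces the stacky structure induced on $C'$ at $p'$ to have tame cyclic stabilizer acting through the root construction. The worry is that $C'$ with its induced stack structure might be non-reduced, or might carry a non-faithful stabilizer. A non-faithful stabilizer does no harm: we can rigidify, since the generic stabilizer is a tame gerbe whose étale covers are still tame. We would handle both issues by working étale locally on the coarse space, writing $X'$ near $p'$ as $[V/G]$ with $G$ linearly reductive. Then $\tC'\to C'$ pulls back to an étale cover of the preimage curve in $V$. Its composite to $C$ is a quotient by a subquotient of $G$, hence tame, and the rest is Abhyankar's lemma.
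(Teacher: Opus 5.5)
Your route is the paper's. The normalization of $X'$ gives an \'etale double cover of the stacky strict transform $C'$, which is \'etale-locally a $\bmu_e$-root stack of $C$ at $p$ by \cite{AOV1}, and such a root cannot absorb the wild monodromy of $\tC\to C$. You finish with local fields and Abhyankar's lemma, while the paper compares $\Hom(\pi_1(-),\ZZ/2\ZZ)$ for $C'$ and $C$.

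\textbf{The gap.} You claim that the stabilizer at $p'$, being linearly reductive and acting faithfully on a regular one-dimensional germ, is cyclic of order prime to $2$. In characteristic $2$ this is false. The group $\bmu_2$, and more generally $\bmu_{2^a}$, is diagonalizable, hence linearly reductive, hence a legitimate stabilizer of a tame stack. It acts faithfully on $k\llbracket t\rrbracket$ by giving $t$ weight $1$, with invariants $k\llbracket t^2\rrbracket$. Linearizing on the one-dimensional cotangent space only shows that the stabilizer is $\bmu_e$ for \emph{some} $e$, possibly even.

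The even case is the central one, not an exotic one. The natural weighted blowup with center $(x^2,y^3,z^3)$ of \S\ref{perssec} has $z=s^2z'$, so in the $z$-chart the strict transform of $C$ is exactly the $\bmu_2$-root stack of $C$ at $p$. You are conflating a tame \emph{stack} (linearly reductive stabilizers) with a tamely ramified \emph{cover} (ramification index prime to the characteristic). The same conflation recurs in your last paragraph, where a quotient by a subquotient of $G$ is called tame and Abhyankar's lemma is invoked.

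\textbf{Why it matters and how to fix it.} For even $e$ the extension $K(\pi^{1/e})/K$ is inseparable. So your assertion that $K(\pi^{1/e})^{\mathrm{unr}}$ is tamely ramified over $K$ fails, and the contradiction is not reached. The missing idea is that the $2$-primary part of $\bmu_e$ is infinitesimal and hence invisible to the \'etale topology.
\begin{enumerate}
\item Write $e=2^am$ with $m$ odd.
\item $K(\pi^{1/e})$ is purely inseparable over $K(\pi^{1/m})$ with the same residue field. Hence $K(\pi^{1/e})^{\mathrm{unr}}$ is purely inseparable over $K(\pi^{1/m})^{\mathrm{unr}}$.
\item $K'/K$ is separable, since the cover is \'etale over $C_0$. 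So $K'\subset K(\pi^{1/e})^{\mathrm{unr}}$ forces $K'\subset K(\pi^{1/m})^{\mathrm{unr}}$.
\item $K(\pi^{1/m})^{\mathrm{unr}}$ is tame over $K$, and now your contradiction with wildness goes through.
\end{enumerate}
This is what the paper's Claim, that $C'\to C$ induces a homeomorphism of \'etale topologies, supplies, and it is the point of the proposition. Weighted blowups in characteristic $2$ do insert $\bmu_2$-stabilizers along $C'$, but these cannot kill wild monodromy. (Strictly, the odd part $\bmu_m$ does enlarge $\pi_1$, but not $\Hom(\pi_1(-),\ZZ/2\ZZ)$, which is all either argument uses.)
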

\begin{proof}
Let $C'\into X'$ be the strict transform of $C$. Then $C'$ is a tame stack obtained from $C$ by inserting a stack structure at the origin.

\begin{claim} The morphism  $C' \to C$ induces a homeomorphism of \'etale topologies, in particular an isomorphism on \'etale fundamental groups.\end{claim}

Explicitly, by \cite[Theorem 3.2]{AOV1}, locally in the \'etale topology of $C$, the stack $C'$ is a quotient stack, necessarily by some $\bmu_m$. Such a stack is a $\bmu_m$-root stack along $p$. It can be written explicitly as $\left[\,\left(\Spec\, \cO_C[\eta]\,/\,(\eta^m-z)\right)\ \big/ \ \bmu_m\,\right]$. \qed


 If $X'$ is normal crossings, then the normalization induces an \emph{\'etale} double cover $\tilC'\to C'$, corresponding to a homomorphism $\phi'$ of the \'etale fundamental group of $C'$ to $\ZZ/2\ZZ$.

We have a \emph{commutative} diagram
{
$$
\xymatrix{\tilde \phi' \ar@{}[r]|-<\in
\ar@{|->}[d]
&\quad\Hom(\pi_1(C'),\ZZ/2\ZZ) \ar[rr]^{\simeq}_{\text{(Claim)}} \ar[d]_{\operatorname{res}'} &&  \Hom(\pi_1(C),\ZZ/2\ZZ) \ar[d]^{\operatorname{res}}
&\ar@{}[l]|-<{\ni} {\pmb\nexists} \tilde \phi\hphantom{\pmb\nexists} 
 \ar@{|.>}[d]
\\
\phi'
\ar@{}[r]|-<\in
 &Hom(\pi_1(C'\smallsetminus \{p\} ),\ZZ/2\ZZ) \ar@{=}[rr] && Hom(\pi_1(C\smallsetminus \{p\}),\ZZ/2\ZZ)&\ar@{}[l]|-<\ni \phi.}$$
}
The \'etale 2-cover $\tC \setminus \{p\} \to C\setminus \{p\}$ corresponds to an element $$\phi \in  Hom(\pi_1(C\smallsetminus \{p\}),\ZZ/2\ZZ).$$  Since the cover is wild, hence not \'etale, it is not the restriction of an element  $ \tilde \phi \in \Hom(\pi_1(C),\ZZ/2\ZZ)$.

On the other hand the corresponding cover $\tilC'\setminus \{p\} \to C'\setminus \{p\}$ does extend to the \'etale double cover  $\tilC'\to C'$, corresponding to $\tilde\phi' \in \Hom(\pi_1(C'),\ZZ/2\ZZ)$. Its image in $\Hom(\pi_1(C),\ZZ/2\ZZ)$ restricts to $\phi$ since the diagram is commutative, a contradiction.
%
\end{proof}

\subsubsection{Wild pinch points and their normalization --- explicit study}\label{Sec:wild-explicit} We consider the collection of singular surfaces $X=X_n \subset \AA^3$ in characteristic 2 given by $$x^2 + xyz^n + y^2z=0.$$
These surfaces are not normal. Normalization is given by $w = x/y$ so that $$w^2 + wz^n + z,$$ a regular surface with conductor given by $y=0$.   Once again monodromy at the origin is $\ZZ/2\ZZ$, and we have seen in Proposition \ref{tameunresolve} that no weighted blowup will trivialize it, since weighted blowups are  tame at heart.

\subsubsection{Covers} As shown in Section \ref{Sec:wild-pp-stack}, this scheme does admit a ``partial resolution'' preserving normal crossings, by a Deligne--Mumford stack which is not tame. Let us describe it explicitly.

Consider $$T = \Spec k[w',y_1,y_2,z] / (w'^2 + z^nw' + z, y_1y_2)$$ with the $\ZZ/2\ZZ$-action $$(w',y_1,y_2,z) \mapsto (w'+z^n, y_2,y_1,z).$$

Then we have an invariant morphism $T \to X$ defined by $$(x,y,z)\quad  \mapsto\quad  (w'(y_1+y_2) +z^n y_2 ,\ y_1+y_2, \ z).$$
Also $X' := [T/(\ZZ/2\ZZ)] \to X$ is the coarse moduli space  morphism; it is easily seen to be bijective on isomorphism classes of geometric points, and properness is the delicate point. But on each component $T_i = V(y_i)$ the map $T_i \to X$ factors through an isomorphism with the normalization $\tilde X$, so $T\to X$ is finite.

As a sanity check note that
\begin{equation} \label{Eq:sanity} x^2 + xyz^n + y^2z = w'^2 y^2 + z^{2n} y_2^2 + (w'y+ z^ny_2)yz^n + y^2z = z^{2n} y_1y_2 = 0,\end{equation}
using $y_1y_2=0$ and  $w'^2 + z^nw' = z.$




To tie this to the notation of Section \ref{Sec:wild-explicit} above, on the normalization $\tilde T$ of $T$ one has $w'=w$ on the locus $y_2=0$ and $w' = w+z^n$ on the locus $y_1=0$.

\subsubsection{Transformation on the ambient space: a remaining challenge}\label{Sec:ambient-challenge}
A remaining challenge is to define  a  \emph{natural, regular, stack-theoretic} transformation of the ambient space $\AA^3$ which gives rise, on proper transforms, to the morphism $T \to X$, or to a related stack-theoretic resolution of $X$. Write $$C_{w',z} = V(w'^2 + z^nw' + z)\subset \Spec k[w',z]\qquad \text{ and }\qquad \AA^2_{y_1,y_2} = \Spec k[y_1,y_2].$$

The displayed equation \eqref{Eq:sanity} above
$$x^2 + xyz^n + y^2z = z^{2n} y_1y_2 $$
allows one to extend the $\ZZ/2\ZZ$-morphism $T \to X$ to a $\ZZ/2\ZZ$-morphism $C_{w',z}\times  \AA^2_{y_1,y_2} \to P$, where $P$ is the blowup of $\AA^3_{x,y,z}$ along the ideal $ (x^2 + xyz^n + y^2z, z^{2n})$. Setting $u=y_1y_2$, the morphism lands in the chart $x^2 + xyz^n + y^2z= z^{2n}\cdot u$ corresponding to $z^{2n}$.  The morphism is finite of degree 2 onto this chart  and provides a stack resolution of the chart.

The other chart $(x^2 + xyz^n + y^2z)\cdot v= z^{2n} $ is singular where $\{z=x=yv=0\}$. A compatible resolution here would provide an embedded wild stacky normal crossings resolution of $S \subset \AA^3$. This would give an answer of sorts to Problem \ref{Prob:embedded}, though one would like it to arise from a general procedure, analogous to the wild root-stack construction of \cite{Kobin}.

\section{Characteristic 0}\label{Sec:nc0}

\subsection{Stack-theoretic NC-preserving  resolution in general}\label{Sec:nc}

Our goal here is to describe a principle that guarantees  results such as Theorem    \ref{Th:nc} to hold. {We also prove that the principle holds for normal-crossings points.}

As explained in \cite[Section 1.2]{BDS-B-RL}, one reduces, using classical resolution methods, to the case where $X\subset Y$ is a divisor in a smooth variety $Y$. Assuming that $X$ is a divisor  simplifies the discussion somewhat.\footnote{We note that  \cite{Wlodarczyk-nc} avoids this step, at the small price of discussing invariants in higher codimension.}

We denote by $X^{\nc(k)}$ the locus of normal crossings points of order  $k$. Such points are called \emph{$\nc(k)$-points.}

\subsubsection{Wonderful invariants} Recall that the invariant $\inv_{\cI_X}$ introduced in \cite{ATW-weighted} of a normal crossings divisor at an $\nc(k)$-point is $(k)_k:=(k,\ldots,k)$ with $k$ entries. For the present paragraph we   only need the following, see \cite[Section 3.3.33]{Wlodarczyk-cobordant}:
\begin{enumerate}
\item \emph{$\inv:|X| \to \Gamma$ is a singularity invariant:} an upper semicontinuous function taking values in a well ordered set, and taking the minimum value precisely on smooth points of $X$.
\item \emph{$\inv$ is functorial} for smooth morphisms (or analytic submersions).
\item \emph{$\inv$ has smooth level sets:} the sets $W_K = \{p\in X: \inv_X(p) = K\}$ are smooth.
\item \emph{$\inv$ has immediately reducible level sets:} the set $W_K$ carries a weighted center of blowup $J_K$; the corresponding blowup of the open set  $\{p\in X: \inv_X(p) \leq K\}$ has maximal invariant $<K$.
\end{enumerate}

We say such $\inv$ is a \emph{wonderful invariant}. Apart from \cite{ATW-weighted}, such wonderful  invariants  were introduced for the normal crossings context in \cite{Wlodarczyk-cobordant, ABQTW} and in the foliated case in \cite{ABTW-foliated}.

\subsubsection{Adequate classes of singularities.} We rely on two properties of the class $\cC$ of hypersurface normal crossings singularities $\{(p\in X)\}$ in the presence of a wonderful invariant:

\begin{definition} \hfill
\begin{enumerate}
\item\label{It:open-support} A class $\cC$ of singularities   \emph{has  open support} if the locus of  points $\{p\in X|(p\in X) \in \cC\}$ is open.
\item\label{It:inv-closed}  The class is \emph{$\inv$-closed} if  the locus  of points where $(p\in X) \in \cC$ and $\inv_{\cI_X}(p) = K$ is \emph{constant} is closed in the level set $W_K = \{p\in X: \inv_X(p) = K\}$.
\end{enumerate}
If both conditions hold we say $\cC$ is \emph{adequate}.
\end{definition}

\begin{theorem}\label{Th:principle} Let\ \  $\inv$ be a  wonderful invariant, and let $\cC$ be an adequate class of singularities. Then any variety $X$ admits a $\cC$-preserving weighted  resolution, namely a modification $X'\to X$, where $X'$ has singularities in $\cC$ and $X' \to X$ is an isomorphism over the locus of $\cC$ points $\{x\in X: (x\in X) \in\cC\}$.
\end{theorem}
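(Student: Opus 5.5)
We argue by induction on the maximal value of the invariant, modifying the usual weighted resolution algorithm so that it never touches $\cC$-points. Write $\Gamma$ for the well ordered value set. Let $U_\cC\subset X$ be the locus of $\cC$-points; it is open by open support. Since $\inv$ is upper semicontinuous and $\Gamma$ is well ordered, we may run a transfinite (in practice finite, by quasi-compactness) induction on the \emph{maximal value of $\inv$ on the non-$\cC$ locus}. Set
\[ K_X=\max\{\inv_X(p) : p\in X\setminus U_\cC\}. \]
The goal is to produce a weighted blowup $X_1\to X$ which is an isomorphism over $U_\cC$ and satisfies $K_{X_1}<K_X$. When the non-$\cC$ locus is empty, $X$ already has only $\cC$ singularities and we are done. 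Smooth points should belong to $\cC$, which holds in our case since smooth points are $\nc(1)$-points. Functoriality of $\inv$ for smooth morphisms lets the same argument run on stacks, via smooth charts.

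\textbf{Inductive step.} Fix $K=K_X$ and let $Z=(X\setminus U_\cC)\cap W_K$ be the set of non-$\cC$ points of maximal invariant $K$. Points of invariant $>K$ exist only inside $U_\cC$, so we work on the open set
\[ X^{\leq K}=\{\inv\le K\}\cup U_\cC . \]
The key point is that $Z$ is a union of connected components of the smooth level set $W_K$, and hence is open and closed in $W_K$. It is closed because $X\setminus U_\cC$ is closed. For openness we use $\inv$-closedness: $W_K\cap U_\cC$ is closed in $W_K$, so its complement $Z$ is open in $W_K$. Since $W_K$ is smooth, $Z$ is a union of its connected components. Therefore the weighted center $J_K$ carried by $W_K$ restricts to a center $J_Z$ supported on $Z$; we simply discard the components meeting $U_\cC$.

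\textbf{Applying the center.} Set $X'=X^{\leq K}\setminus (W_K\cap U_\cC)$. This is open, because $W_K\cap U_\cC$ is closed in $W_K$ and $W_K$ is closed in $\{\inv\le K\}$. Moreover $X'\cup U_\cC$ covers $X$. On $X'$ the maximal invariant is $\le K$ and the level set $W_K\cap X'$ equals $Z$, so by immediate reducibility the blowup of $J_Z$ drops the maximal invariant of $X'$ below $K$. This blowup is an isomorphism off $Z$, and in particular over $U_\cC$, so it glues with the identity on $U_\cC$ to give $X_1\to X$. Every non-$\cC$ point of $X_1$ either lies over $X'$, where the invariant is now $<K$, or maps isomorphically to a non-$\cC$ point of $X\setminus Z$, whose invariant is $<K$ by maximality. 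Hence $K_{X_1}<K$.

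\textbf{Where the difficulty lies.} The main obstacle is making sure this bookkeeping is honest. First, the new $\cC$-locus of $X_1$ contains the preimage of $U_\cC$; this uses that the $\cC$ condition is local (\'etale or smooth local), so a point isomorphic to a $\cC$-point remains one. Second, the gluing must be legitimate, since the blowup is defined only on $X'$; this works because the center is closed in $X$, so the blowup is an isomorphism near $\partial X'$. Finally, well-ordering guarantees termination, and the composite $X_n\to X$ is the required $\cC$-preserving weighted resolution.
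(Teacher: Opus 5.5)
Your argument is essentially the paper's. Your $Z$ is the paper's $W_K^{\notin\cC}$, and your $X'$ is meant to be its open neighborhood $X_\circ=X\setminus\{p\in X^\cC:\inv(p)\ge K\}$. Gluing the restricted center with the identity over $U_\cC$ is exactly the paper's center $J_K^\cC$ (equal to $J_K$ on $X_\circ$ and $(1)$ elsewhere), and it drops the modified invariant $\inv^\cC$.

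One slip to fix: as written, $X^{\le K}=\{\inv\le K\}\cup U_\cC$ is all of $X$. So $X'=X\setminus(W_K\cap U_\cC)$ would contain any $\cC$-points of invariant $>K$, and it need not be open near them. You should instead take $X'=\{\inv\le K\}\setminus(W_K\cap U_\cC)$. This is the set your openness argument actually proves open, it has maximal invariant $\le K$, and it coincides with $X_\circ$.
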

\begin{proof} Consider the the function $$\inv^\cC(p) :=\begin{cases} \min \Gamma & x\in X^\cC \\ \inv(p) & x\notin X^\cC.\end{cases}$$ It is the modified invariant that asks us to pretend that points in $\cC$ are smooth. Since $X^\cC\subset X$ is open, the function $\inv^\cC$ is upper semicontinuous.

If $X \neq X^\cC$ and $K= \max_{p\in X}\inv^\cC(p)$ the maximal value,   the maximal locus of $\inv^\cC$ is $W_K^{\notin\cC} :=W_K \setminus X^\cC$. Note that the set $\{p\in X^\cC: \inv(p)\geq K\}$ is closed in $X$ and disjoint from $W_K^{\notin\cC}$. Therefore $W_K^{\notin\cC}$  has an open neighborhood $X_\circ := X \setminus \{p\in X^\cC: \inv(p)\geq K\}$.

By assumption $W_K$ carries an $\inv$-reducing weighted center of blowup $J_K$. Its restriction to $X_\circ$ reduces the invariant on this set by functoriality.
Therefore the weighted center of blowup $$J_K^\cC = \begin{cases} J_K & p\in X_\circ \\
(1) & p\notin X_\circ\end{cases}$$
is an $\inv^\cC$-reducing invariant everywhere: if $X' \to X$ is its blowup, then $\max_{p'\in X'} \inv^\cC(p')<K$, as needed.





\end{proof}

\subsubsection{Normal crossings singularities}
Normal crossings hypersurface singularities have open support by definition. Our next goal is to prove that the class is $\inv$-closed, in particular reducing Theorem \ref{Th:nc} to Theorem \ref{Th:principle}:
\begin{proposition}\label{Lem:nc(k)}
Let $X\subset Y$ be a hypersurface in a smooth variety over a field $F$ of characteristic 0. Set $K= (k)_k$. Then $X^{\nc(k)}$ is closed in the level set $W_K$.
\end{proposition}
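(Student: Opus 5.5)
The plan is to work at a point $q\in W_K$ in the closure of $X^{\nc(k)}\cap W_K$ and show directly that $q$ is an $\nc(k)$-point. The approach is to attach to every point of $W_K$ a homogeneous form of degree $k$ in $k$ variables, and to show that $\nc(k)$ is detected by this form factoring into $k$ linearly independent linear forms. By the description of the invariant in \cite{ATW-weighted}, near $q$ the level set $W_K$ is the smooth codimension-$k$ locus $V(x_1,\dots,x_k)$ of part of a system of parameters, and the weighted center is $J_K=(x_1^k,\dots,x_k^k)$. Since all weights are equal to $k$, the condition $f\in J_K$ (weighted sense) means $f\in I^k$ with $I=(x_1,\dots,x_k)$. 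Write $f=\sum_{|\alpha|=k}c_\alpha x^\alpha$ modulo $I^{k+1}$. For $p\in W_K$ define the \emph{initial form} $F_p=\sum c_\alpha(p)\,x^\alpha$, a degree $k$ form on the normal space $N_p=I/I^2\otimes k(p)$ of $W_K$. This gives an algebraic family of forms parametrized by $W_K$, well defined up to units and linear changes of coordinates.

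\emph{Step 1 (characterization).} I will show that a point $p\in W_K$ is $\nc(k)$ if and only if $F_p$ splits over $\overline{k(p)}$ as a product $\ell_1\cdots\ell_k$ of $k$ linearly independent linear forms. One direction is clear: at an $\nc(k)$-point, $f=u\,y_1\cdots y_k$ and the $y_i$ span $I$ modulo $I^2$. For the converse, the factors $\ell_i$ are pairwise coprime. A formal Hensel-type lifting of the factorization of the initial form along the $I$-adic filtration gives $f=u\prod \tilde\ell_i$ in $\widehat{\cO}_{Y,p}$, with $\tilde\ell_i$ lifting $\ell_i$. The $\tilde\ell_i$ are then part of a regular system of parameters, and Artin approximation passes this to an étale neighborhood. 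Over a non-closed residue field I will work after base change to $\overline{k(p)}$, which is harmless for the \'etale-local definition.

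\emph{Step 2 (limits).} The locus of degree $k$ forms in $k$ variables that are products of $k$ linear forms is closed: it is the image of the proper multiplication map $(\PP^{k-1})^k\to\PP(\mathrm{Sym}^k)$. Hence if $q$ lies in the closure of the $\nc(k)$-locus of $W_K$, the form $F_q$ is still a product $\ell_1\cdots\ell_k$ of linear forms, though possibly with linearly dependent factors. It remains to exclude dependence.

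\emph{Step 3 (non-degeneracy from the invariant).} This is the main obstacle. If the $\ell_i$ are dependent, then after a linear change of the $x_i$ the form $F_q$ involves only $x_1,\dots,x_{k-1}$. I would then argue that $\inv_{\cI_X}(q)\neq(k)_k$. The invariant of \cite{ATW-weighted} is computed by successive maximal contact and coefficient ideals. If the weighted-homogeneous initial form of $f$ relative to $J_K$ lies in the subring generated by fewer than $k$ of the variables, then the parameter $x_k$ appears in $f$ only to weighted order strictly larger than $k$. The center would then have to give $x_k$ weight $>k$, or else the sequence would terminate earlier. In either case the invariant differs from $(k)_k$ and is lexicographically larger, which contradicts $q\in W_K$. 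Concretely I would verify this via the characterization that $J$ is the center at $p$ exactly when $f\in\overline J$ and the $J$-initial form is not contained in a subring of fewer variables. This rigidity property of the initial form is the key input from \cite{ATW-weighted}. With dependence excluded, Step 1 gives that $q$ is $\nc(k)$, so $X^{\nc(k)}$ is closed in $W_K$.
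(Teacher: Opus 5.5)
\textbf{There is a genuine gap: the ``if'' direction of your Step~1 is false for $k\ge 3$.} This is exactly the difficulty the proposition has to get around.

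Take $k=3$ and $f=xyz+x^4=x(yz+x^3)$ at the origin of $\AA^3$. The order of $f$ is $3$, and $\partial_y\partial_z f=x$, $\partial_x\partial_z f=y$, $\partial_x\partial_y f=z$ give three independent maximal contact elements. So $\inv_X(0)=(3,3,3)$, and $F_0=xyz$ is a product of three linearly independent linear forms. Yet $X$ has only two branches at $0$, and one of them, $yz+x^3=0$, is an $A_2$ singularity. So $X$ is not normal crossings at $0$.

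Your Hensel-type lifting breaks down for the following reason. When $k\ge 3$, the ideal generated by the complementary products $\prod_{j\ne i}\ell_j$ contains no power of $I$; here $x^4\notin(yz,xz,xy)$. So the higher-order error terms cannot be absorbed into corrections of the factors, even allowing a unit. The lifting is valid only for $k\le 2$, where $(\ell_1,\ell_2)=I$, and Artin approximation is not the issue.

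Your Steps~2 and~3 are fine. The properness of the multiplication map is a clean way to see that $F_q$ still splits. Step~3 amounts to observing that $\cD_Y^{\le(k-1)}(f)$ modulo $m_q^2$ is spanned by the $(k-1)$-fold partials of $F_q$. But together they only show that the tangent cone at $q$ is simple normal crossings, and the example shows this does not imply that $q$ is an $\nc(k)$-point.

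\textbf{The missing idea.} The normal-crossings hypothesis at the generization must be used a second time, to show that the $k$ branches along $W_K$ persist at $q$. Passing it through the pointwise initial form loses exactly this information.

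The paper does this by induction on $k$:
\begin{enumerate}
\item Blow up $W$. Take a point of the exceptional divisor $E_W$ over $q$ where exactly $k'<k$ of the transformed coordinates $t_i'$ vanish.
\item Such a point is a specialization of an $\nc(k')$-point lying over the generic point, so its invariant is at least $(k')_{k'}$.
\item Its invariant is also at most $(k')_{k'}$. The logarithmic invariant relative to $E_W$ bounds the ordinary one, and it is computed by the restriction to $E_W$, which is the simple normal-crossings projectivized normal cone.
\item By induction, $X'$ is therefore normal crossings along $E_W$, with branches meeting $E_W$ transversally.
\item The preimage of $W$ in the normalization of $X'$ is then the disjoint union of the $k$ loci $E_W\cap V(t_i')$. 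This forces $X$ to have $k$ branches at $q$, and Lemma~\ref{ncinv} finishes the argument.
\end{enumerate}
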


We start with a simple lemma that sheds light on the issue at hand.

\begin{lemma}\label{ncinv}
Let $X\subset Y$ be a hypersurface in a smooth variety in characteristic 0 and $x\in X$ a point. Let $k$ be the number of branches of $X$ at $x$ and set $K=(k,\ldots,k)=(k)_k$. Then $\inv_X(x)\ge K$ and the equality holds if and only if $X$ is normal-crossings at $x$.
\end{lemma}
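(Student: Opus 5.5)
We work formally (or \'etale-locally) at $x$, using the two features of the invariant of \cite{ATW-weighted} that we need: it can be computed after passing to the completion $\hatcO_{Y,x}\cong F\llbracket y_1,\dots,y_n\rrbracket$ (after a finite separable residue field extension, which does not change $\inv$), and it is read off from a maximal-contact sequence. Concretely, if $a_1=\ord_x(f)$ for a local equation $f$, then $\inv_X(x)=(a_1,a_2,\dots)$. Here $a_1$ is the order, and the later entries are the invariants of the coefficient ideal on a maximal-contact hypersurface $V(y_1)$, after rescaling.

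Over $\hatcO_{Y,x}$ we factor $f=f_1\cdots f_k$, with one factor for each branch. Each $f_i$ has order at least $1$, so $a_1=\ord f\ge k$. If $a_1>k$, then $\inv_X(x)>K$ lexicographically and we are done. So we may assume $a_1=k$, which forces every $f_i$ to have order exactly $1$. Each branch is then formally smooth, and we write $f_i=\ell_i+(\text{higher order})$ with $\ell_i$ a nonzero linear form.

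The key step compares the remaining entries of the invariant with the rank $r$ of the span of the $\ell_i$. The initial form of $f$ is $\prod\ell_i$, which depends on exactly $r$ variables. By the standard property that the invariant is bounded below by that of the initial form in its weighted version, and by the fact that a homogeneous form of degree $k$ in only $r<k$ essential variables has invariant $(k,\dots,k,\infty,\dots)$ with fewer than $k$ finite entries, hence lexicographically larger than $(k)_k$, we get $\inv_X(x)>K$ whenever $r<k$. Since the entries are nondecreasing, $\inv$ is also strictly larger than $K$ when some later entry exceeds $k$.

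When $r=k$ we may take $y_i=f_i$ as part of a regular system of parameters, because the linear parts are independent. Then $f=y_1\cdots y_k$ is normal crossings, and a direct computation (maximal contact $y_1$, coefficient ideal $(y_2\cdots y_k)^{k/(k-1)\cdots}$ inductively) gives exactly $(k)_k$. This proves both the inequality and the equivalence. Conversely, an $\nc(k)$ point has $k$ branches and invariant $(k)_k$ by this same computation.

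The main obstacle is the case $r<k$, where the order is $k$ but the tangent hyperplanes are dependent. Here one must carefully justify that the invariant strictly exceeds $(k)_k$, rather than just differing from it. I would handle this by induction on $k$ through the coefficient ideal on maximal contact $y_1=\ell_1$. Restricted to $V(y_1)$, the product $f_2\cdots f_k$ restricts either to a product with dependent linear parts or to something of higher order, so induction gives coefficient-ideal invariant strictly greater than $(k-1)_{k-1}$. After rescaling by $k/(k-1)$, this yields an invariant strictly greater than $(k)_k$.
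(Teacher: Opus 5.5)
Your outline is sound: order $\ge k$, reduction to $a_1=k$ with smooth branches $f_i=\ell_i+\cdots$, and independence of the $\ell_i$ giving an $\nc(k)$ point with invariant $(k)_k$. But the decisive case $r<k$ is not established.

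The ``standard property'' $\inv(f)\ge\inv(\operatorname{in} f)$, for the ordinary initial form $\operatorname{in} f=\prod\ell_i$, is false. Since the tangent cone is a degeneration of $f$, one expects the opposite inequality. Take $f=x(x+y^2)$, which has $k=2$ smooth branches with $r=1$:
\begin{itemize}
\item its maximal contact element is $x+\tfrac12y^2$, not $\ell_1=x$, which does not lie in $\cD_Y^{\le 1}(f)$;
\item the restricted coefficient ideal is $(y^4)$, so $\inv(f)=(2,4)$;
\item $\inv(x^2)=(2)$, and $(2,4)<(2)$ in the order you use.
\end{itemize}
The weighted initial form for the weights of $\inv(f)$ does have the same invariant, but here it is $x_1^2-\tfrac14y^4$, not $\prod\ell_i$, so it cannot give your bound.

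The fallback induction fails on the same example. First, $\ell_1$ need not be a maximal contact. Second, the coefficient ideal on a maximal-contact hypersurface is assembled from $f$ and its derivatives of order $<k$, raised to suitable powers. It is not the restriction of $f_2\cdots f_k$, and in general not the ideal of a hypersurface with $k-1$ branches. So neither the induction hypothesis nor a rescaling by $k/(k-1)$ applies.

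The missing idea, which is the whole of the paper's argument, is to compare only the \emph{number} of leading entries equal to $k$, not full invariants.
\begin{itemize}
\item Given $a_1=k$, that number is the dimension $d$ of the image of $\cD_Y^{\le k-1}(f)$ in $m_x/m_x^2$, i.e.\ the number of independent maximal contacts.
\item By the Leibniz rule, every term of $D(f_1\cdots f_k)$ with $D$ of order $\le k-1$ keeps some $f_i$ undifferentiated. Hence $\cD_Y^{\le k-1}(f)\subseteq(f_1,\dots,f_k)$, its image in $m_x/m_x^2$ lies in the span of the $\ell_i$, and $d\le r$.
\item If $r<k$, entry $d+1\le k$ of $\inv_X(x)$ is either $>k$ or absent, so $\inv_X(x)>K$.
\item If $d\ge k$, then $r=k$, so the $f_i$ are part of a regular system of parameters.
\end{itemize}
With this replacing your key step, the rest of your proof goes through.
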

\begin{proof}
The claim is \'etale-local, hence we can separate branches to irreducible components and assume that $X=V(\cI)$, where $\cI=(f_1\ldots f_k)$ with $f_1\.f_k\in m_x$. It follows that $\cD_Y^{\le (k-1)}(\cI)\subseteq(f_1\.f_k)$ and hence $\ord_x(\cI)\ge k$. Thus, $\inv_X(x)$ starts with an entry $\geq k$ and hence $\inv_X(x)\le K$ if and only if there exist at least $k$ linearly independent maximal contacts, that is, the dimension $d$ of the image of $\cD_Y^{\le (k-1)}$ in $m_x/m_x^2$ is at least $k$. If this happens then the images of $f_1\.f_k$ in $m_x/m_x^2$ are linearly independent, and hence $X$ is nc at $x$ and the exact equality $\inv_X(x)=K$ holds.
\end{proof}

Unfortunately, the number of branches is not upper semicontinuous, as the example of the Whitney umbrella $V(x^2-y^2z)$ shows: here the general singularity point $\eta$ is normal-crossings with invariant $(2,2)$ with two  transversal branches, but the pinch point $w$ is unibranch and has invariant $(2,3,3)$.  This is reflected concretely by the observation that $\sqrt{z}$ lies in the strict henselization $\cO_\eta^{sh}$ and generates an extension which separates the branches and makes the singularity simple normal-crossings, but the extension to $w$ is ramified and $\sqrt{z}\notin\cO_w^{sh}$. So, Proposition \ref{Lem:nc(k)} is not immediate, and all the arguments we could find are a bit subtle.

\begin{proof}[Proof of Proposition \ref{Lem:nc(k)}]

Assume that $w\in W_K$ possesses a generization $\eta\in W_K$ which is normal-crossings in $X$. We should prove that $w$ is also normal-crossings at $w$.

\emph{We start with a few  reductions.} We can freely replace $X\subset Y$ by taking an \'etale neighborhood of $w$ and perform finite base field extensions during the proof. In addition, replacing $w$ by a closed point to which it specializes in $W_K$ and replacing $F$ by a finite extension, we can assume that $w$ is an $F$-point. Replacing $\eta$ by the generic point of the component of $W_K$ in which it lies preserves both conditions, hence we can assume that $\eta$ is a generic point of a component of $W_K$. In particular, we see that $W_K$ is of codimension $k$.\footnote{This is true even without the normal-crossings assumption} Since the invariant is upper semicontinuous we can remove all points with $\inv_X>K$, so that $W=W_K$ is now the maximality locus of $\inv_X$, and hence underlies a closed weighted center. In particular, $W$ is smooth and we can choose local parameters $\ut=(t_1\.t_k),\uy=(y_1\.y_{n-k})$ of $\cO_{Y,w}$ so that $\cI_W=(t_1\.t_k)$ is the ideal of $W$ on $Y$. The question being local, we may remove all components of $W_K$ except the one containing $\eta $ and $w$, so that $W$ is geometrically irreducible.

\emph{Leading terms and normal cones.} Let $X=V(f)$ with $f\in\cO_{Y,w}$. Since $\inv_X(w)=K$, the element $f$ is of order $k$ at $w$ and has $k$ maximal contacts vanishing along $W$ and linearly independent modulo $\cI_W^2$, that is, $\cD_Y^{\le (k-1)}(f)=\cI_W$.\footnote{In fact, $\inv_X(w)=K$ along $W$, regardless of normal-crossings, if and only if $\cD_Y^{\le (k-1)}(\cI_X)=\cI_W$.}

The image $f_k$ of $f$ in the graded coordinate ring $\oplus_{n\geq 0} \cI_W^n/\cI_W^{n+1}$ of the normal bundle $N_W\to W$ of $W$ in $Y$ describes the normal cone $C$ of $W$ in $X$.  We denote its fiber over $w$ by $C_w$. It is defined in the vector space $N_w$ by the homogeneous form $(f_k \mod m_{w,W})$.



Looking at the expansion of $f$ in $\hatcO_w=F\llbracket \ut,\uy\rrbracket$ one obtains that any monomial of $f$ is of degree at least $k$ in $\ut$ as otherwise there would exist an element in $\cD_Y^{\le (k-1)}(f)$ not contained in $\cI$. Separating terms depending on $\ut$ only, $f=f_k^0+g$, where $g\in\cI_W^{k}$ and $f_k^0\in F[\ut]$ is homogeneous of degree $k$ and independent of $\uy$ --- it is the lift of $(f_k \mod m_{w,W})$ obtained from separating the variables. Restricting the differential expressions of the maximal contact elements of $X$ onto the $k$-truncations we see that $\cD_Y^{\le (k-1)}(f_k^0)$ contains $k$ elements linearly independent modulo $\cI_W^2$, and hence $\cD_Y^{\le (k-1)}(f_k^0)$ and $V(f_k^0)$ also has invariant $K$ along $W$. \footnote{In fact, we just recovered in our setting a much more general fact, that the fiber at $w$ of the weighted normal cone $C$ to any $Z$ in $Y$ has the same invariant as $Z$. In our case the weights are $(k\.k)$, so this is the usual tangent cone to $X$.}

\emph{We show that $C_w$ is normal crossings.}
Now, it is  time to use the normal-crossings condition at $\eta$ for the first time. Set $E=k(\eta)$. Since $X$ is normal-crossings at $\eta$, so is $C= V(f_k)$. Thus there exists a finite extension $E'/E$ such that $f_k=\prod_{i=1}^k\ell_i$ in $E'\llbracket\ut\rrbracket$ and this decomposition restricts, by setting $\uy=0$,  to a decomposition of $f_k^0$ into linear factors $\ell_i^0$. However, $f_k^0$ has coefficients in $F$ (rather than $F[\uy]$), and hence the decomposition of $f_k^0$ has coefficients in $F'=E'\cap\overline{F}$ which is a finite extension of $F$. Thus, replacing $F$ by a finite extension we can assume that $f_k^0$ is a product of $k$ linear forms. Since the invariant of $C_w$ is $K$, these are linearly independent linear terms.

\emph{We show that the cone $C$ of $W$ in $X$ is normal crossings.} Consider the dual projective space $\PP N_W^\vee$ parametrizing hyperplanes in the fibers of $\PP N_W \to W$. It is proper over $W$. The condition on a hyperplane being contained in the projective cone $\PP C$ of $W$ in $X$ is closed, so defines a closed subscheme $H_X \subset \PP N_W^\vee$, hence the space $H_X$ of hyperplanes contained in $\PP C$ is proper over $W$. Its generic fiber is finite and reduced of degree $k$, since $C_\eta$ is normal crossings by assumption. The argument above shows that  the fiber of $H_X$ over $w$ has precisely $k$ reduced points. It follows that $H_X$ is finite \'etale over $w\in W$ of degree $k$.

Replacing $Y$ by an \'etale neighborhood of $w$ we may ensure that $H_X$ becomes a disjoint union of $k$ copies of $W$. The universal homogeneous linear form on $N$ defining the corresponding family of hyperplanes defines elements $t_i$ so that, on the chosen \'etale neighborhood, we have $f_k = t_1\cdots t_k$, and by Lemma \ref{ncinv} it has normal crossings, as needed.\footnote{In fact the lemma is unnecessary here --- the forms are linear therefore have order 1.}

 \emph{Having thus replaced $Y$ by an \'etale neighborhood, we have that $C$ is \emph{simple} normal-crossings.}
Unfortunately, the example of $f=x(yz+x^3)=xyz+x^4$ shows that even when $\inv_X(w)=K$ and the tangent cone is simple normal-crossings, $X$ does not have to be normal-crossings at $w$ --- the factorization of the $k$-form does not have to lift further. All we have is that $f = t_1\ldots t_k+ g$ where $g\in (\ut)^{k+1}$. So, we will have to use that $X$ is normal-crossings at $\eta$ once again, and show that the branches at $\eta$ persist at $w$.

\emph{We apply induction on $k$, so that we may assume that the proposition holds for $nc(k')$ points, for all $X$ and all $k'<k$}.

\emph{We show that  the blowup of $X' \to X$ along $W$ is normal crossings transversal to the exceptional.} Using the formalism of the degeneration to the normal cone \cite{Wlodarczyk-cobordant, Quek-Rydh,ABTW-foliated}, the blowup has equation of the form $f' = t'_1\cdots  t'_k+sh(s,t',y)$, where $s$ is the exceptional variable and $t_i'$ are the transformed variables satisfying $t_i = st_i'$ for $i=1\ldots,k$.

Consider a point $p_w$ on the exceptional $E_W=\{s=0\}$ above $w$ where $t_1'=\cdots =t_{k'}'=0$ but $t'_{k'+1},\ldots,t'_{k}$ are units. On the one hand, such a point is a specialization of the point $p_\eta$ with the same coordinates $t'_i$ over $\eta$, which is an $\nc(k')$ point with invariant $K'=(k',\ldots,k')$. By upper semicontinuity $\inv_{X'}(p_w) \geq K'$.

On the other hand, we claim that the invariant at $p_w$ of $X' = V(f')$ is $\leq K'$. Indeed, taking the \emph{logarithmic} invariant of \cite{Quek, ABTW-foliated} with respect to the divisor $V(s)$ boils down to the invariant of the restriction $t'_1\cdots  t'_k$ of $f'$ to $\{s=0\}$. This restriction is the projectivised normal cone $\PP C$,  which has invariant $K'$ at such point, since it is normal crossings.  One therefore obtains $K' = \loginv_{X'}(p_w) \geq \inv_{X'}(p_w)$.\footnote{Alternatively, this follows from upper semicontinuity of $\inv$ in families, considering $s$ as a parameter.}

It follows that $\inv_{X'}(p_w) = K'$, and, being a specialization of an $\nc(k')$-point, the induction assumption implies it is an $\nc(k')$-point. Thus $X'$ is normal crossings along $E_W$.

Moreover,  the normal crossings branches of $X'$ meet $E_W$ transversally along $t_i'=0$. Indeed, in the equation above, taking the derivative $\partial_{t_2'}\cdots \partial_{t_{k'}'}(f)$ and clearing units defines at $p_w$ a maximal contact element of the form $t_1'+ sh_1(s,\ut',\uy)$. Permuting the variables  one obtains $k'$ maximal contact elements of the form $t_j'+ sh_j(s,\ut',\uy)$ at $p_w$.
This forces the normal crossings factors at $p_w$ to be of the same form, as in the proof of Lemma \ref{ncinv}.


\emph{We conclude that $X$ is normal crossings at $w$.} Passing to an open neighborhood of $E_W$ we may assume $X'$ is normal crossings, hence its normalization $\tX' \to X'$ is regular. The normalization $\tX'$ contains the $k$ \emph{disjoint}
loci $E_W^i := E_W\cap v(t_i')$, whose union is the preimage of $W$. Since these loci are disjoint, an \'etale neighborhood of each one maps to  a distinct branch of $X$ along $W$.\footnote{It also follows from Artin's contractibility criterion \cite{Artin} that $E_W^i$ are contracted to disjoint loci isomorphic to $W$ in the normalization $\tX\to X$ of $X$. This contractibility also follows from the more concrete but less general Castelnuovo-Moishezon's contraction theorem  \cite{Moishezon}, \cite[Theorem 1.1]{arenacontraction}: these loci are regular, each of which is a hyperplane in $E_W$ isomorphic to $\PP^{k-2}_W$, in particular its normal bundle is $\cO(-1)$ along the fibers.} 

It follows that $X$ has $k$  branches at $w$, and by Lemma  \ref{ncinv}  $X$ is normal crossings at $w$.
\end{proof}

\subsubsection{Where this comes from.} We were working on the present manuscript, focusing on the previous sections, when we happened upon an argument for Theorem \ref{Th:nc}. When we shared a draft of this argument with Belotto da Silva and with W{\l}odarczyk, we learned of their soon-to-be-posted papers  \cite{BDS-B-nc,Wlodarczyk-nc}. We therefore repurposed this section to bring out the underlying principles. The core of this section was posted as \cite{AT-partial}. This version expands the treatment and corrects errors in earlier versions.

\subsubsection{Where this might go.} 


Mathematically, one might seek other adequate classes of singularities of interest, though low-hanging fruit is already picked:

Another adequate class of hypersurface singularities  is the class $\{\inv\leq K\}$ of singularities $(p\in X)$ with $\inv(p)\leq K$, but this is a triviality.  Jaros{\l}aw W{\l}odarczyk proves Theorem \ref{Th:nc} in \cite[Theorem 2.0.3]{Wlodarczyk-nc}  for the wider class of non-reduced normal crossings singularities, \'etale locally given by $\prod x_i^{k_i}$ for a sequence of local parameters $x_i$. One can check that the argument of Proposition  \ref{Lem:nc(k)} applies here too, as can be deduced from \cite[Theorem 1.1.1]{Wlodarczyk-nc}.

One can similarly apply Theorem \ref{Th:principle} to normal crossings singularities in the presence of normal crossings exceptional divisors, using logarithmic invariants \cite{Wlodarczyk-cobordant, ABQTW}.
This would likely not go beyond  \cite{BDS-B-nc,Wlodarczyk-nc}.     The question has not been addressed in the foliated case of \cite{ABQTW}.

A non-example is the class $\cC$ of configurations of hyperplanes, which is not $\inv$-closed: the variety $V((x^2-t^2y^2)y)$ has maximal locus $W_{(3,3)} = V(x,y)$, which includes the origin, the only point not in $\cC$. We are informed by W{\l}odarczyk that a student is studying a variant that is $\inv$-closed.

\subsection{Destackification} \label{Sec:poly-circulant}

Consider now a separated normal-crossings DM stack $X$ with coarse moduli space $\oX$.

Working \'etale-locally on $X$ we may assume a codimension-1 closed embedding in a smooth stack $X \subset Y$ exists.  Indeed, working \'etale locally over $\oX$, we may assume $X = [V/G]$, where $V$ is affine and $G$ is the stabilizer of a point $p$ of $X$. Lifting generators of $\cO(V)$ we obtain a $G$-equivariant embedding $V \subset W$ with $W$ affine and regular. Write $\cI_V$ for the ideal. The elements of $\cI_{V,p} / m_p^2$ form a linear representation $\bar H$ of $G$. Since $G$ is reductive in characteristic 0, the representation $\bar H$  admits a lifting  $H \subset \cI_{V,p}$, giving a $G$-equivariant collection of maximal-contact elements of order 1.  Writing $U = V(H) \subset W$ we have a codimension-1 equivariant embedding $V\subset U$ with $U$ regular, resulting in a codimension-1 embedding $X \subset Y := [U/G]$.

Any two embeddings are \'etale-locally  isomorphic, with stabilizer-preserving \'etale charts, so  that procedures functorial for  stabilizer-preserving \'etale morphisms glue together. This holds for the relevant  constructions in the present section.

Working with such an embedding, there is an associated divisorial logarithmic structure $\cM\to \cO_Y$ on $Y$ and a closed subgroup $\cG\subset \cI_Y$ of the inertia of $Y$ which is the kernel of the action on the characteristic monoid $\ocM$. The formation of $\cG$ is independent of embeddings and is  functorial for  stabilizer-preserving \'etale morphisms.\footnote{Note that the logarithmic structure itself does not glue even for schemes, see \cite{log-moduli}, Theorem 5.6.} As shown by Rydh \cite[Theorem A.1.3]{ATW-destackification}, there is a canonical coarsening morphism of stacks $Y \to Y\thickslash\, \cG$ rigidifying $\cG$. It is functorial for stabilizer-preserving \'etale morphisms.

We claim that there is a  relative destackification $$\xymatrix{(Y'',X'') \ar[r]\ar[d] & (Y,X) \ar[d]\ar[dr] \\ (Y',X') \ar[r] &  (Y\thickslash\, \cG, X\thickslash\, \cG)\ar[r] & (\overline Y, \oX),}$$  functorial for  stabilizer-preserving \'etale morphisms. In particular $Y'$ is regular, $X'$ is a normal crossings divisor, and the morphism $Y' \to Y \thickslash\ \cG$ is representable.

We note that the work \cite{Bergh},  \cite[Corollary 3.19.1]{Harper}, \cite{Bergh-rydh}, assumes $Y$ is a standard pair, with divisor which is an ordered simple normal-crossings divisor, and $X$ cannot serve for this purpose. Instead we apply relative destackification in Harper's sense to the pair $(Y,\emptyset)$ relative to the coarsening morphism $Y \to Y\thickslash\ \cG$: this is obtained by taking a schematic \'etale presentation $R\double U \to  Y\thickslash\ \cG$, and applying destackification to $U \times_{Y\thickslash\ \cG} Y$ and $R \times_{Y\thickslash\ \cG} Y$. These provide, see \cite[Corollary 3.18.5]{Harper}, descent data for  a destackification diagram
$$\xymatrix{(Y'',E'') \ar[r]\ar[d] & (Y,\emptyset) \ar[d] \\ (Y',E') \ar[r] &  (Y\thickslash\, \cG, \emptyset)}$$
with $Y'$ regular and $Y' \to Y\thickslash\ \cG$ representable. Functoriality guarantees that $E''$ is transversal to $X''$, hence $X'$ remains normal-crossings.
%
%
Functoriality also guarantees that the process is trivial over the locus where $X$ is representable.

\begin{proof}[Proof of Corollary \ref{Cor:poly-circulant}] The group $\cI_{Y \thickslash\ \cG}$ acts faithfully on the characteristic monoid $\ocM_{Y \thickslash\ \cG}$. Since $Y' \to Y \thickslash\ \cG$ is representable, also  $\cI_{Y'}$ acts faithfully on $\ocM_{Y'}$, in particular  $\cI_{X'}$ acts faithfully on $\ocM_{X'}$, as needed. If further  $\cI_X$ is abelian, this implies that $\overline X'$  has
higher pinch point singularities, as needed.
\end{proof}

\appendix

\section{The wilderness}

\subsection{Some ramification theory}
In this appendix we recall some facts about wild quadratic extensions, see \S\ref{quadsec}.

\subsubsection{Unibranch extensions}
Throughout this section $K'/K$ is a finite separable extension of discrete valuation fields of positive residual characteristic $p$ with the associated extension of DVR's $\cO'/\cO$. Also we assume that $K'/K$ is {\em unibranch}, that is, the valuation extends uniquely. By $\nu\:K^\times\to \ZZ= K^\times/\cO^\times$ we denote the valuation of $K$, which extends to $\nu\:K'^\times\to\frac{1}{e}\ZZ$, where $e=e_{K'/K}$ is the ramification index. By $\nu'=e\nu$ we denote the rescaled valuation on $K'$ with group of values $\ZZ$.

Since the extension is separable, $\cO'$ is a finite $\cO$-module. Thus, $\cO'\toisom\cO^n$ for $n=[K':K]$, and hence the fundamental equality $n=ef$ holds, where $f=[k':k]$ is the degree of the residual extension.

\subsubsection{The different}
The module $\Omega_{\cO'/\cO}$ is a finite torsion module, whose length $\delta_{K'/K}$ is the most fundamental invariant of the extension, called the {\em different} of the extension of DVR's or the associated valued fields. We will only need the following particular case:

\begin{lemma}\label{diflem}
Let $K'/K$ be a finite separable unibranched extension of valued fields such that $\cO'=\cO[\alpha]$ and let $f_\alpha$ be the minimal polynomial of $\alpha$ over $K$, then

(i) $\Omega_{\cO'/\cO}=\cO'dt/\cO'f'_\alpha dt$ is a cyclic module.

(ii) $\delta_{K'/K}=\nu'(f'_\alpha(\alpha))$.

(iii) $\Omega_{\cO'/\cO}=\Omega_{\cO'/\cO'_u}$, where $\cO'_u$ is the ring of integers of the maximal unramified subextension.

(iv) $\Omega_{\cO'/\cO}=\Omega_{\hatcO'/\hatcO}$.
\end{lemma}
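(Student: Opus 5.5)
We prove the four items in turn. Write $\cO'=\cO[\alpha]\cong\cO[t]/(f_\alpha)$. This presentation is legitimate: $\cO[t]\to\cO'$ is surjective, and its kernel is $(f_\alpha)$. To see the kernel claim, note that $f_\alpha$ is monic with coefficients in $\cO$ because $\alpha$ is integral and $\cO$ is integrally closed. Division by a monic polynomial then shows that any $g\in\cO[t]$ with $g(\alpha)=0$ is an $\cO[t]$-multiple of $f_\alpha$.

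For (i), we use the conormal sequence for the quotient $\cO[t]\to\cO'$:
$$(f_\alpha)/(f_\alpha^2)\to\Omega_{\cO[t]/\cO}\otimes\cO'\to\Omega_{\cO'/\cO}\to0.$$
The middle term is $\cO'dt$, and the first map sends $f_\alpha$ to $f'_\alpha(\alpha)\,dt$. Hence $\Omega_{\cO'/\cO}=\cO'dt/\cO'f'_\alpha(\alpha)dt$, which is cyclic.

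For (ii), the extension is unibranch, so $\cO'$ is a DVR. A quotient $\cO'/(a)$ then has length $\nu'(a)$. Separability gives $f'_\alpha(\alpha)\neq0$, so the length is finite and equals $\nu'(f'_\alpha(\alpha))$.

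For (iii), we use the exact sequence
$$\Omega_{\cO'_u/\cO}\otimes\cO'\to\Omega_{\cO'/\cO}\to\Omega_{\cO'/\cO'_u}\to0.$$
Since $\cO'_u/\cO$ is unramified with separable residue extension, it is étale. Therefore $\Omega_{\cO'_u/\cO}=0$, and the second map is an isomorphism.

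For (iv), we use that completion is flat. Moreover, $\hatcO'=\cO'\otimes_\cO\hatcO$, because $\cO'$ is finite over $\cO$ and unibranch, so the tensor product is already the $m'$-adic completion. Then $\hatcO'=\hatcO[t]/(f_\alpha)$, and the computation in (i) applies verbatim. This gives $\Omega_{\hatcO'/\hatcO}=\Omega_{\cO'/\cO}\otimes_{\cO'}\hatcO'$. The module $\Omega_{\cO'/\cO}$ is finite length and supported at the closed point, so this tensor product is $\Omega_{\cO'/\cO}$ itself.

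The main point needing care is (iv): we must justify $\cO'\otimes\hatcO=\hatcO'$. This uses finiteness of $\cO'$ (which follows from separability) together with the single-branch assumption. Without the unibranch hypothesis, the tensor product would split as a product of completions at the different branches.
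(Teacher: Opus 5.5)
Your proof is correct and follows the paper's route. The paper uses the same presentation $\cO'=\cO[t]/(f_\alpha)$, justified by normality of $\cO$, together with the exact sequence for differentials to get (i), and leaves (ii)--(iv) as easy consequences. You supply those consequences correctly: length computation in the DVR $\cO'$ for (ii), the sequence for $\cO\to\cO'_u\to\cO'$ with $\Omega_{\cO'_u/\cO}=0$ for (iii), and base change of the presentation to $\hatcO$ for (iv).

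One small remark: finiteness of $\cO'$ over $\cO$ is already automatic from $\cO'=\cO[\alpha]$ with $\alpha$ integral, so you need not invoke separability for it.
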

\begin{proof}
Since $\cO$ is normal, $\cO'=\cO[t]/f_\alpha\cO[t]$, and (i) follows from the first exact sequence for differentials. The other claims follow from (i) easily.
\end{proof}

\begin{rem}
Sometimes it is even more natural to look at the module of logarithmic differentials $\Omega^\rmlog_{\cO'/\cO}$ generated by the logarithmic differentials $\delta x=\frac{dx}{x}$ and its length $\delta^{\log}_{K'/K}$ called the {\em logarithmic different} of the extension. One always has that $\delta^{\log}_{K'/K}=\delta_{K'/K}-e+1$. The log different vanishes if and only if the extension is tame (or log smooth) and $\Omega^\rmlog_{\cO'/\cO}=\Omega^\rmlog_{\cO'/\cO'_t}$, where $\cO'_t$ is the ring of integers of the maximal tame subextension.
\end{rem}

\subsubsection{Monogenicity}
The monogenicity assumption $\cO'=\cO[\alpha]$ is automatically satisfied in two basic cases: (i) if the extension of residue fields $k'/k$ is separable, (ii) when $[K':K]=p$ and the extension is separable (or just $ef=p$). In fact, in the first case one can take any $\alpha=u+\pi'$, where $\pi'$ is a uniformizer of $\cO'$ and $u\in\cO'$ is such that its reduction generates $k'$ over $k$, see \cite[Proposition~9.1]{Frohlich}. 
In the second case one should take $\alpha=\pi'$ if $e_{K'/K}=p$ and $\alpha=u$ if $f_{K'/K}=p$.

\begin{rem}
Informally speaking, the assumption that $k$ is perfect makes the ramification theory 1-dimensional, as indicated by the cyclicity of $\Omega_{\cO'/\cO}$ for any extensions $\cO'/\cO$. This assumption has other interesting consequences too, including the existence of a meaningful Herbrand's function relating upper and lower jumps in ramification groups.
\end{rem}

\subsubsection{Wild extensions of degree $p$}
Assume now that $K'/K$ is wild separable of degree $p$. The cases of $f_{K'/K}=p$ and $e_{K'/K}=p$ should be considered separately, and we will only study the first one. Choose any uniformizer $\pi'\in\cO'$; it annihilates an Eisenstein equation $f_\alpha=t^p+a_1t^{p-1}+\dots+a_p$, where $a_p=\pi$ is a uniformizer of $\cO$ and $a_i\in m_{\cO}$. Then using that $1,\alpha\.\alpha^{p-1}$ form an orthogonal basis of $K'$ over $K$ the value of $\delta=\delta_{K'/K}$ can be computed as $$\delta=\nu'(f'_\alpha(\alpha))=\nu'\left(p\alpha^{p-1}+\sum_{i=1}^{p-1}ia_i\alpha^{p-i-1}\right)=\min_{0\le i\le p}(p\nu(a_i)+p-i-1),$$ where we set $a_0=p$ for shortness. In particular, the minimum is obtained for a single value of $i$ which we denote $r$ and there are two cases:

\begin{itemize}
\item[(i)] $0<r<p$ and then $\delta<p\nu(p)+p-1$ and $(p,\delta+1)=1$.
\item[(ii)] $r=p$ and then $\delta=p\nu(p)+p-1$ and the characteristic is mixed.
\end{itemize}

\begin{rem}
In fact, the divisibility and inequality (in the mixed characteristic case) conditions are the only restrictions on the value of the different. Moreover, to realize any permissible value of the different one can take $K'$ generated by the root $\alpha$ of a trinom $t^p+\pi^nt^r+\pi$ or the binom $t^p+\pi$.

\end{rem}

\subsubsection{A classification in the complete case}
Assume that $k$ is algebraically closed and $\chara(K)=p$. If $K$ is complete, then $K=k((\pi))$ (non-canonically) and $\pi=\sum_{i=p}^\infty c_i\pi'^i$ with $c_i\in k$, $c_p\neq 0$. Deriving this series with respect to $\pi$ one obtains that $\delta$ is the minimal number such that $d=\delta+1$ is prime to $p$ and $c_{d}\neq 0$. It follows that $\sum_{i<d}c_i\pi'^i=\pi_1^p$ and $\sum_{i\ge d}c_i\pi'^i=\pi_2^n$ for uniformizers $\pi_1,\pi_2$ of $K'$. It turns out that playing with $\pi$ one can even achieve that $\pi_1=\pi_2$. Namely, it is proved in \cite{BT}, see a more general \cite[Theorem~4.3.8]{BT} and the explanation in \cite[Remark~4.4.11]{BT}, that there exists uniformizers $\pi$ and $\pi'$ such that $\pi=\pi'^p+\pi'^{\delta+1}$. In fact, this claim is equivalent to the claim that up to automorphisms of both $K$ and $K'$ the isomorphism class of the extension $K'/K$ is determined by the different. Also it is shown that the extension is normal (and hence Artin-Shreier) if and only if $(p-1)|\delta$, and in this case, it is generated by a root of $t^n-t-\pi^{-\delta/(p-1)}$.

\bibliographystyle{amsalpha}
\bibliography{principalization}

\end{document}